\title%[]
{Patterns in random permutations avoiding the pattern 132}
\date{22 January, 2014}
\author{Svante Janson}
\thanks{Partly supported by the Knut and Alice Wallenberg Foundation}
\address{Department of Mathematics, Uppsala University, PO Box 480,
SE-751~06 Uppsala, Sweden}
\email{svante.janson@math.uu.se}
\newcommand\urladdrx[1]{{\urladdr{\def~{{\tiny$\sim$}}#1}}}
\subjclass[2010]{60C05; 05A05, 60F05} 
\numberwithin{equation}{section}
\renewcommand\le{\leqslant}
\renewcommand\ge{\geqslant}
\newtheorem{theorem}{Theorem}[section]
\newtheorem{lemma}[theorem]{Lemma}
\newtheorem{corollary}[theorem]{Corollary}
\theoremstyle{definition}
\newtheorem{example}[theorem]{Example}
\newtheorem{remark}[theorem]{Remark}
\theoremstyle{remark}
\newenvironment{romenumerate}[1][0pt]{% optional argument changes indentation
\addtolength{\leftmargini}{#1}\begin{enumerate}% gives (i), (ii) etc.
 }{\end{enumerate}}
\newcounter{oldenumi}
\newcounter{thmenumerate}
\newenvironment{thmenumerate}
{\setcounter{thmenumerate}{0}%
 \def\item{\par% \ifnum\thethmenumerate=0\else\par\fi %\noindent\fi
 \refstepcounter{thmenumerate}\textup{(\roman{thmenumerate})\enspace}}
}
{}
\newcounter{romxenumerate}   %less indented than standard.
\newcounter{xenumerate}   %no left indentation; thus wider lines
\newcommand\pfitem[1]{\par(#1):}
\newcommand\pfitemx[1]{\par#1:}
\newcommand\pfitemref[1]{\pfitemx{\ref{#1}}}
\newcommand{\refT}[1]{Theorem~\ref{#1}}
\newcommand{\refC}[1]{Corollary~\ref{#1}}
\newcommand{\refL}[1]{Lemma~\ref{#1}}
\newcommand{\refR}[1]{Remark~\ref{#1}}
\newcommand{\refS}[1]{Section~\ref{#1}}
\newcommand{\refE}[1]{Example~\ref{#1}}
\newcommand\REM[1]{{\raggedright\texttt{[#1]}\par\marginal{XXX}}}
\newenvironment{comment}{\setbox0=\vbox\bgroup}{\egroup} %deletes!
\xdef\klockan{\the\count1.0\the\count255}
\xdef\klockan{\the\count1.\the\count255}\fi
\DeclareMathOperator*{\sumsumsum}{\sum\sum\sum}
\newcommand{\sumni}{\sum_{n=1}^\infty}
\newcommand\set[1]{\ensuremath{\{#1\}}}
\newcommand\biggset[1]{\ensuremath{\biggl\{#1\biggr\}}}
\newcommand\xpar[1]{(#1)}
\newcommand\bigpar[1]{\bigl(#1\bigr)}
\newcommand\Bigpar[1]{\Bigl(#1\Bigr)}
\newcommand\lrpar[1]{\left(#1\right)}
\newcommand\bigabs[1]{\bigl|#1\bigr|}
\newcommand\Bigabs[1]{\Bigl|#1\Bigr|}
\def\rompar(#1){\textup(#1\textup)}    % usage: \rompar(...)
\newcommand\parfrac[2]{\lrpar{\frac{#1}{#2}}}
\newcommand\Bigparfrac[2]{\Bigpar{\frac{#1}{#2}}}
\def\xexp(#1){e^{#1}}
\newcommand\floor[1]{\lfloor#1\rfloor}
\newcommand\ntoo{\ensuremath{{n\to\infty}}}
\newcommand\rtoo{\ensuremath{{r\to\infty}}}
\newcommand\downto{\searrow}
\newcommand\punkt{.\spacefactor=1000}    % om problem!
\newcommand\ie{i.e\punkt}
\newcommand\eg{e.g\punkt}
\newcommand\viz{viz\punkt}
\newcommand\cf{cf\punkt}
\newcommand{\as}{a.s\punkt}
\newcommand{\tend}{\longrightarrow}
\newcommand\dto{\overset{\mathrm{d}}{\tend}}
\newcommand\pto{\overset{\mathrm{p}}{\tend}}
\newcommand\eqd{\overset{\mathrm{d}}{=}}
\newcommand\bbR{\mathbb R}
\newcommand\bbC{\mathbb C}
\newcounter{CC}
\newcounter{cc}
\renewcommand\Re{\operatorname{Re}}
\newcommand\E{\operatorname{\mathbb E{}}}
\newcommand\PP{\operatorname{\mathbb P{}}}
\newcommand\Var{\operatorname{Var}}
\newcommand\Bi{\operatorname{Bi}}
\newcommand\ga{\alpha}
\newcommand\gb{\beta}
\newcommand\gd{\delta}
\newcommand\gD{\Delta}
\newcommand\gG{\Gamma}
\newcommand\gl{\lambda}
\newcommand\gL{\Lambda}
\newcommand\go{\omega}
\newcommand\gs{\sigma}
\newcommand\gt{\tau}
\renewcommand\phi{\xxx}  %% WARNING
\newcommand\cB{\mathcal B}
\newcommand\cL{{\mathcal L}}
\newcommand\cP{\mathcal P}
\newcommand\qw{^{-1}}
\newcommand\qww{^{-2}}
\newcommand\qq{^{1/2}}
\newcommand\qqw{^{-1/2}}
\newcommand\qqqq{^{1/4}}
\newcommand\qqc{^{3/2}}
\newcommand\intoi{\int_0^1}
\newcommand\oi{[0,1]}
\newcommand\dd{\,\mathrm{d}}
\newcommand{\mgf}{moment generating function}
\newcommand{\gfx}{generating function}
\newcommand\lhs{left-hand side}
\newcommand\rhs{right-hand side}
\newcommand\fS{\mathfrak S}
\newcommand\fSn{\fS_n}
\newcommand\fSx{\fS_*}
\newcommand\fSnzzz{\fS_n(\zzz)}
\newcommand\fSkzzz{\fS_k(\zzz)}
\newcommand\fSxzzz{\fSx(\zzz)}
\newcommand\nt{n_\gt}
\newcommand\ns{n_\gs}
\newcommand\pint{\pinx{\gt}}
\newcommand\pinx[1]{\boldsymbol{\pi}_{#1,n}}
\newcommand\pinzzz{\pinx{\zzz}}
\newcommand\zzz{132}
\newcommand\tr{^\dagger}
\newcommand\xbar{\overline}
\newcommand\ed{\E_\gd}
\newcommand\td{T_\gd}
\newcommand\tn{T_n}
\newcommand\gdx[1]{\gd^{-#1}}
\newcommand\ogdx[1]{O\bigpar{\gd^{-#1}}}
\newcommand\gdxx[1]{\gdx{(#1)}}
\newcommand\ogdxx[1]{O\bigpar{\gdxx{#1}}}
\DeclareMathOperator*{\sumsum}{\sum\sum}
\newcommand\gc{\gamma}
\newcommand\gxx[1]{\gG\bigpar{(#1)/2}}
\newcommand\nxx[1]{n^{-(#1)/2}}
\newcommand\gsxx[2]{\gs_{#1}\dotsm\gs_{#2}}
\newcommand\ee{B}
\newcommand\eegs{\ee_\gs}
\newcommand\gspp[1]{\gs^{(#1)}}
\newcommand\ex{\mathbf{e}}
\newcommand\coi{C[0,1]}
\newcommand\vv{\bar v}
\newcommand\tdx{\tilde h}
\newcommand\tdl{\tilde h_L}
\newcommand\tf{\tilde f}
\newcommand\Phix{\widetilde\Phi}
\newcommand\Psis{\Psi_\gs}
\newcommand\gsik{1\dotsm k}
\newcommand\gski{k\dotsm 1}
\newcommand\df{depth first}
\newcommand\dfw{depth first walk}
\newcommand\cex{2\qqc \ex}
\newcommand\clex{2\qq\ex}
\newcommand\txfrac[2]{\tfrac{#1}{#2}}
\newcommand{\Holder}{H\"older}
\begin{document}

\begin{comment}  % Some suggestions:
05 Combinatorics 
05A Enumerative combinatorics [For enumeration in graph theory, see 05C30]
05A05 Permutations, words, matrices 

60 Probability theory and stochastic processes
60C Combinatorial probability
60C05 Combinatorial probability

60F Limit theorems [See also 28Dxx, 60B12]
60F05 Central limit and other weak theorems
60F17 Functional limit theorems; invariance principles

\end{comment}

\begin{abstract} 
We consider a random permutation drawn from
the set of 132-avoiding permutations of length $n$ and show that
the number of occurrences of another pattern $\sigma$ has a limit distribution,
after scaling by $n^{\lambda(\sigma)/2}$ where $\lambda(\sigma)$ is the
length of $\sigma$ plus the number of descents.
The limit is not normal, and can be expressed as a functional of a Brownian
excursion. Moments can be found by recursion.
\end{abstract}

\maketitle

\section{Introduction}\label{S:intro}

We say that two sequences 
(of the same length)
$x_1\dotsm x_k$ and $y_1\dotsm y_k$ 
of real numbers  have \emph{the same order} if
$x_i<x_j \iff y_i<y_j$ for all $i,j\in [k]$.

Let $\fS_n$ be the set of permutations of $[n]:=\set{1,\dots,n}$.
If $\gs=\gs_1\dotsm\gs_k\in\fS_k$ and $\pi=\pi_1\dotsm\pi_n\in\fS_n$,
then an \emph{occurrence} of $\gs$ in $\pi$ is a subsequence
$\pi_{i_1}\dotsm\pi_{i_k}$, with $1\le i_1<\dots<i_k\le n$,
that has the same order as $\gs$.
We let $\ns(\pi)$ be the number of occurrences of $\gs$ in $\pi$, and note
that
\begin{equation}\label{11}
  \sum_{\gs\in\fS_k} \ns(\pi) = \binom nk,
\end{equation}
for every $\pi\in\fS_n$.

We say that $\pi$ \emph{avoids} $\gs$ if $\ns(\pi)=0$; otherwise, $\pi$
\emph{contains} $\gs$. Let 
\begin{equation}
\fS_n(\gs):= \set{\pi\in\fS_n:\ns(\pi)=0},
\end{equation}
the set 
of permutations of length $n$ that avoid $\gs$.
We also let $\fSx(\gs):=\bigcup_{n=1}^\infty\fS_n(\gs)$ be the set of
$\gs$-avoiding permutations of arbitrary length. 

\begin{remark}\label{Rsymm}
  For later use, note that 
$n_{\gs\qw}(\pi\qw)=n_\gs(\pi)$.
Similarly, for the reverse $\gs\tr:=\gs_k\dots\gs_1$,
$n_{\gs\tr}(\pi\tr)=n_\gs(\pi)$, and for the complement
$\xbar\gs=(k+1-\gs_1)\dotsm(k+1-\gs_k)$,
$n_{\xbar\gs}(\xbar\pi)=n_\gs(\pi)$.
In particular, the maps $\pi\mapsto\pi\qw$,
$\pi\mapsto\pi\tr$ and $\pi\mapsto\xbar\pi$
are bijections $\fS_n(\gs)\to \fS_n(\gs\qw)$,
$\fS_n(\gs)\to \fS_n(\gs\tr)$
and
$\fS_n(\gs)\to \fS_n(\xbar\gs)$.
\end{remark}

The general problem that we are interested in here is to
take a fixed permutation 
$\gt$,  
%of length $|\gt|=k$, 
and let $\pint$ be a uniformly random $\gt$-avoiding permutation, \ie, a
uniformly random element of $\fS_n(\gt)$, and then study the 
distribution of the random variable $\ns(\pint)$  for some other
fixed permutation $\gs$.
More precisely, we are mainly interested in 
asymptotics of the distribution as \ntoo{}. 
(Although our methods also yield  exact formulas for finite $n$.)
The present paper is only a partial contribution to this general problem, and we
will soon concentrate on the single case $\gt=132$.

\begin{remark}
It is well-known that if $\pi$ is a uniformly random permutation in $\fS_n$,
without any restriction,
and $\gs$ is a fixed permutation,
then $\ns(\pi)$ has an asymptotic normal distribution as \ntoo; moreover,
this holds jointly for several $\gs$. See \citet{Bona-Normal,Bona3}
and \citet{SJ287}. We shall see that the restricted case is different.
\end{remark}

\begin{remark}
The case $|\gt|=2$ is trivial. By symmetry (\refR{Rsymm}), it suffices to
consider 
$\gt=21$, and then $\nt(\pi)$ is the number of inversions in $\pi$; the only
permutation in $\fS_n$ that avoids 21 is the identity permutation so 
$\fS_n(21)=\set{12\dotsm n}$ has only one element.
Hence, the simplest non-trivial cases are the cases $|\gt|=3$. There are 6
permutations  $\gt\in\fS_3$, but by the symmetries in \refR{Rsymm}, it
suffices to consider the two cases $\gt=123$ and $132$.
\end{remark}

As a background, note first that it is a classical problem to enumerate the sets
$\fS_n(\gt)$, either exactly or asymptotically,  and to study various properties
of the generating function;
see \citet[Chapters 4--5]{Bona}.
In particular, two permutations $\gs$ and $\gt$
are said to be \emph{Wilf-equivalent} if $|\fS_n(\gs)|=|\fS_n(\gt)|$ for all
$n$. It is know that all permutations of length 3  are Wilf-equivalent, 
with $|\fS_n(\gt)|=\binom{2n}{n}/(n+1)$, the $n$th Catalan number $C_n$,
when $|\gt|=3$, see \eg{} 
\cite[Exercises 2.2.1-4]{KnuthI}, 
\cite{Simion-Schmidt},
\cite[Exercise 6.19ee,ff]{StanleyII},
\cite[Corollary 4.7]{Bona};
in contrast, 
not all permutations of length 4 are Wilf-equivalent. 
(The classification of Wilf-equivalent
permutations of length 4 was quite difficult, see \cite{Bona} and the
references given there.)

A simpler version of the general 
problem above is to find (at least asymptotically)
the expectation $\E\ns(\pint)$.
(If the number $|\fS_n(\gt)|$ is known, this is equivalent to finding the
total number of occurrences of $\gs$ in all $\gt$-avoiding permutations of
length $n$.)
This version of the problem was posed by \citet{Cooper},
and  has been studied by
\citet{Bona-abscence} ($\gt=132$, $\gs=1\dotsm k$ and $\gs=k\dotsm1$),  
%\gs = 1...k, and conversely; exact, asymptotics
\citet{Bona-surprising} ($\gt=132$, $|\gs|=3$ and certain longer $\gs$),  
%\gs = 231, 312, 213 + some longer; exact, asymptotics
\citet{Homberger} ($\gt=123$, $|\gs|\le3$);
furthermore \citet{CEF} studies the case $\gt=321$, $\gs=21$.
(or, equivalently, $\gt=123$, $\gs=12$).
These papers concentrate on exact formulas and generating functions;
asymptotics are derived as corollaries.
 \citet{Rudolph} studied the problem of when 
$\E n_{\gs_1}(\pint)=\E n_{\gs_2}(\pint)$ (in the case $\gt=132$).

In particular, for $\tau=132$,
by \cite{Bona-abscence},  \cite{Bona-surprising} 
and straightforward singularity analysis 
(see \cite[Chapter VI]{FS}), 
or by Examples \ref{Eed3} and \ref{Eed1...k}
below, as \ntoo,
\begin{align}
  \E n_{12}(\pinx{132}) 
&\sim \frac{\sqrt\pi}2 n^{3/2}, \label{e12}
\\
  \E n_{123}(\pinx{132}) 
&\sim \frac12 n^2, \label{e123}
\\
  \E n_{213}(\pinx{132})
&=  \E n_{231}(\pinx{132})
=  \E n_{312}(\pinx{132})
\sim \frac{\sqrt\pi}{8} n^{5/2},
\label{e213}\\
  \E n_{321}(\pinx{132}) 
&\sim\binom n3 \sim \frac16 n^3, \label{e321}
\intertext{and, for any fixed $k\ge1$,
generalizing \eqref{e12}--\eqref{e123},}
  \E n_{1\dotsm k}(\pinx{132}) 
&\sim \frac{2^{1-k}\sqrt\pi}{\gG(k/2)} n^{(k+1)/2}. \label{e1...k}
\end{align}
Note that in \eqref{e213}, the three expectations are equal for any $n$; the
equality of the two latter is trivial because
$n_{231}(\pinx{132})$
and $n_{312}(\pinx{132})$ have the same distribution, as a consequence of
the first symmetry in \refR{Rsymm}. 
The first equality is non-trivial and more surprising; in fact
$n_{213}(\pinx{132})$ and $n_{231}(\pinx{132})$ do not have the same
distribution, in general. 
(They have different variances already for $n=5$, as is shown by
an enumeration, by hand or by  computer.)

The more general problem of studying the distribution, and not just the
expectation, of $\ns(\pint)$ was raised in \cite{SJ287}, where higher
moments (and mixed moments) 
are calculated (using computer algebra) for small $n$ for
several cases ($\gt= 132$, 123 and 1234;  several $\gs$ with $|\gs|=3$).

The main result of the present paper 
(\refS{Sresults})
is that the formulas above for the
expectation generalize to arbitrary $\gs\in\fSxzzz$, always with growth as a
half-integer power of $n$, and that, moreover, the random variables after
normalization by this power of $n$ converge to some positive
limit random variables, with convergence of
all moments.

\begin{remark}
The case of forbidding $\tau=123$ has, as said above, been studied by 
 \citet{CEF} ($\gs=12$) and
\citet{Homberger} ($|\gs|\le3$); their results
yield (after simple calculations and corrections of several typos in
\cite{Homberger}), as \ntoo, 
\begin{align}
  \E n_{12}(\pinx{123}) 
&\sim \frac{\sqrt\pi}4 n^{3/2}, \label{e12H}
\\
  \E n_{132}(\pinx{123}) 
&=
  \E n_{213}(\pinx{123})
\sim \frac14 n^2, \label{e132H}
\\
\E n_{231}(\pinx{123})
&=  \E n_{312}(\pinx{123})
\sim \frac{\sqrt\pi}{8} n^{5/2}, \label{e231H}
\intertext{and, which also follows from these and \eqref{11},}
  \E n_{321}(\pinx{123}) 
&\sim\binom n3 \sim \frac16 n^3. \label{e321H}
\end{align}
Cf.~\eqref{e12}--\eqref{e321}. % for $\tau=132$.
Moreover,
  \citet{Homberger} shows that 
also
$\E n_{231}(\pinx{123})
=  \E n_{231}(\pinx{132})$ for any $n$;
however, the distribution of
$n_{231}(\pinx{123})$ differs (in general) 
from the distribution of any of the variables in \eqref{e213}.
(They have different variances already for $n=4$.)

The equivalence given by \cite{CEF} between $n_{12}(\pinx{123})$ and the
number of certain squares under a Catalan path (or equivalently, a Dyck path)
implies by standard results that
\begin{equation}
  n\qqw n_{12}(\pinx{123})\sim 2\qqw \intoi \ex(x)\dd x
\end{equation}
where $\ex$ is a Brownian excursion; this is apart from a factor $1/2$ the
same limit as for $n_{12}(\pinx{132})$, see \refE{Epsi12}.
For the other cases above (excluding the trivial $n_{321}$) we do not know
any asymptotic distribution, and not even asymptotic second moments. 
It seems likely that methods similar to the present paper could be useful in
this case too, using a suitable bijection between $\fS_n(123)$ and binary
trees (\cf{} \refR{Rbij}), but we have not yet attempted it.

It seems much more difficult to show results for any longer $\tau$.
\end{remark}

\begin{remark}\label{R0}
  A special case of the distribution of $\ns(\pint)$ is the probability
  $\PP\bigpar{\ns(\pint)=0}$ that a $\tau$-avoiding permutation also avoids
	$\gs$;
this is equivalent to enumerating the set $\fSn(\gs,\tau)$ of permutations
that avoid both $\gs $ and $\tau$ (given that we know $|\fSn(\tau)|$).
This problem has been studied by various authors (with exact results,
generating functions and asymptotics), see \eg{}
\cite{Simion-Schmidt}, 
\cite{BJS},
\cite{West-forbidden},
\cite{Chow-West},
\cite{RobertsonWZ},
\cite{MV-chebyshev}, \cite{MV-132}, \cite{MV-Lothar},
\cite{Krattenthaler},
\cite{AlbertAB2011},
\cite{AlbertAB2012}.
Some of these also consider the number of $\tau$-avoiding permutations with
exactly $r$ occurences of $\gs$, which is equivalent to $\PP(\ns(\pint)=r)$.
Formally, this is the same as our problem of the distribution of
$\ns(\pint)$, but the emphasis in these papers is on exact formulas for
constant $r$, while we are interested in asymptotic results, with $r$
increasing. It would be interesting to derive asymptotic distributions from
these algebraic results, but this seems difficult.
\end{remark}

\begin{remark}\label{Rflera}
  We have considered avoiding a single pattern $\tau$. Of course, the
  same questions can be asked for a set $\tau_1,\dots,\tau_M$ of two or
  several forbidden patterns, \cf{} the references in \refR{R0} where such
  sets $\fSn(\tau_1,\dots,\tau_M)$ are studied. For a simple example,
there are exactly $2^{n-1}$ permutations in $\fSn(123,132)$, and they have a
simple structure \cite{Simion-Schmidt}
which makes it easy to see that the number $n_{12}$ of noninversions has
a binomial distribution $\Bi(n-1,1/2)$; in this case, $n_{12}$  thus has
an asymptotically normal distribution.
\end{remark}

\section{Main results}\label{Sresults}
From now on we consider only $\gt=132$.
Note that $\ns(\pinzzz)=0$ if $\gs$ contains a copy of $\gt$; hence we only
consider $\gs$ that  themselves avoid $\gt$.

Recall that a \emph{descent} in a permutation $\gs_1\dotsm\gs_k$ is an index
$i\in[k-1]$ such that $\gs_i>\gs_{i+1}$; we also define the last index $k$
to be a decent. (Tradition varies about the latter case; we find this
version convenient for our purposes.) We let $D(\gs)$ be the number of
descents in $\gs$. (Note that with our definition $1\le D(\gs)\le|\gs|$.)
We define 
\begin{equation}\label{gl}
  \gl(\gs):=|\gs|+D(\gs)
\end{equation}
and note that 
\begin{equation}
|\gs|+1\le\gl(\gs)\le 2|\gs|,  
\end{equation}
with the extreme values $\gl(\gs)=|\gs|+1$ if and only if $\gs=1\dotsm k$,
and
$\gl(\gs)=2|\gs|$ if and only if $\gs=k\dotsm 1$, where $k=|\gs|$.

\begin{theorem}\label{Tmain}
There exist 
strictly positive random variables $\gL_\gs$
such that
\begin{equation}\label{tmaind}
 \ns(\pinzzz)/ n^{\gl(\gs)/2} \dto \gL_\gs,
\end{equation}
as \ntoo, jointly for all $\gs\in\fSxzzz$. 
Moreover, this holds with convergence of
all moments, with all moments of $\gL_\gs$ finite, \ie, for any sequence
$\gspp1,\dots,\gspp{M}\in\fSxzzz$, possibly with repetitions,
\begin{equation}\label{tmainmom}
  \E\bigpar{n_{\gspp 1}\dotsm n_{\gspp M}(\pinzzz)}
\sim n^{\sum_\nu\gl(\gspp\nu)/2} \E\bigpar{ \gL_{\gspp1}\dotsm\gL_{\gspp M}}.
\end{equation}
In particular,
for every $\gs\in\fSxzzz$, there exists a positive constant
$A_\gs=\E\gL_\gs$ 
such that
\begin{equation}\label{tmaine}
  \E \ns(\pinzzz) \sim A_\gs n^{\gl(\gs)/2}.
\end{equation}

For a monotone decreasing permutation $k\dotsm1$,  
$\gL_{k\dotsm 1}=1/k!$ is deterministic, but not for any other $\gs$.
\end{theorem}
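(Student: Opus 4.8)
The plan is to exploit the self-similar structure of $132$-avoiding permutations to set up a recursion for all (mixed) moments, to extract the limiting distributions by the method of moments, and to identify the limits as functionals of a Brownian excursion. First I would record the classical decomposition of $\pi\in\fSn(132)$ at the position $p$ of its largest entry $n$: avoidance of $132$ forces every entry left of $p$ to exceed every entry right of $p$, so the left block is order-isomorphic to a $132$-avoider on the top values $\{n-p+1,\dots,n-1\}$ and the right block to one on $\{1,\dots,n-p\}$. Hence $\fSn(132)$ is in bijection with binary trees, a uniform $\pinzzz$ splits (given $p$) into two independent uniform avoiders of the respective block sizes, with $\PP(p-1=j)=C_jC_{n-1-j}/C_n$, and the associated depth-first walk, rescaled by $n$ horizontally and $\sqrt n$ vertically, converges to a normalized Brownian excursion $\ex$; this is the object in terms of which I expect to express each $\gL_\gs$.

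The combinatorial engine is a splitting rule for $\ns$. Given the split at $p$, every occurrence of $\gs\in\fSxzzz$ lies entirely in the left block, entirely in the right block, or \emph{straddles}, in which case the entries falling in the left block are forced to be simultaneously the leftmost positions and the largest values of the occurrence. Straddling occurrences therefore correspond exactly to the skew decompositions $\gs=\ga\ominus\gb$ with $\ga,\gb$ nonempty, the sub-case that uses $\pi$'s maximal entry $n$ corresponding to placing the largest entry of $\gs$ at the split. Summing over $p$ with the Catalan weights above turns this into a recursion for $\E\ns(\pinzzz)$ involving $n_\ga,n_\gb$ on shorter patterns and smaller $n$, and, by expanding the product factor by factor, into a recursion for the joint moments $\E\prod_\nu n_{\gspp\nu}(\pinzzz)$. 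I would then solve it asymptotically by induction on total pattern length, the Catalan convolutions producing Beta-type integrals whose singularity analysis yields half-integer powers of $n$ and the $\gG$-factors already visible in \eqref{e1...k}.

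The heart of the matter, and the main obstacle, is to show that the recursion produces exactly the power $n^{\gl(\gs)/2}$. The governing heuristic is that a descent of $\gs$ (an inversion of two consecutive entries of the occurrence) is generic: in the tree it asks only that the later chosen node not be a strict ancestor of the earlier one, which order $n$ nodes satisfy, whereas an ascent forces the later node to be a strict ancestor of the earlier, leaving only order $\sqrt n$ choices (the typical height). Thus each of the $|\gs|-D(\gs)$ ascents costs a factor $n^{-1/2}$ relative to the fully decreasing baseline $n^{|\gs|}$, giving $n^{|\gs|-(|\gs|-D(\gs))/2}=n^{\gl(\gs)/2}$; carrying this through the induction — verifying that the dominant contribution always comes from the split predicted by this heuristic and that competing splits are genuinely of lower order — is the delicate step. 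Granting the moment asymptotics $\E\prod_\nu n_{\gspp\nu}(\pinzzz)\sim c\,n^{\sum_\nu\gl(\gspp\nu)/2}$, I would define $\E\prod_\nu\gL_{\gspp\nu}$ to be $c$, check that this array is a genuine joint moment sequence obeying a Carleman growth bound, and conclude \eqref{tmaind}--\eqref{tmainmom} by the method of moments, so that the joint statement together with finiteness and convergence of all moments come out simultaneously; positivity of $\gL_\gs$, hence of $A_\gs=\E\gL_\gs$, follows since the limit is an almost surely positive excursion functional.

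Finally, the deterministic value for $k\dotsm1$ follows from \eqref{11}. Among all $\gs\in\fS_k$ the exponent satisfies $\gl(\gs)\le2k$, with equality only for $\gs=k\dotsm1$; hence, using the convergence just established, $\ns(\pinzzz)/n^k=n^{\gl(\gs)/2-k}\cdot\bigpar{\ns(\pinzzz)/n^{\gl(\gs)/2}}\pto0$ for every $\gs\ne k\dotsm1$. Since $\sum_{\gs\in\fS_k}\ns(\pinzzz)=\binom nk$ is deterministic with $\binom nk/n^k\to1/k!$, summing the finitely many lower-order terms gives $n_{k\dotsm1}(\pinzzz)/n^k\pto1/k!$, and therefore $\gL_{k\dotsm1}=1/k!$ is deterministic. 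Conversely, for every $\gs\ne k\dotsm1$ I would show $\Var\gL_\gs>0$ — for instance by checking that the leading constant of $\E\ns(\pinzzz)^2$ obtained from the moment recursion strictly exceeds $A_\gs^2$, or equivalently that the excursion functional representing $\gL_\gs$ is non-constant — so that no other $\gL_\gs$ degenerates.
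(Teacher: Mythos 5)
Your combinatorial setup is sound and matches the paper's: the decomposition at the maximum is exactly \refL{LR}, the bijection with binary trees and the excursion limit are the paper's \refS{Sbinary} and \refL{Ldl}, the descent/ascent heuristic for the exponent $\gl(\gs)/2$ is precisely the black/white vertex count in the proof of \refT{Tpsi}, the moment recursion is the content of \refS{Sexp}--\refS{Shigher} (the paper runs it through a subcritical Galton--Watson tree $\td$ and singularity analysis rather than direct Catalan convolutions, but that is a technical variant), and your derivation of $\gL_{k\dotsm1}=1/k!$ from \eqref{11} is \refR{Rk1}. The genuine gap is the step where you ``check that this array is a genuine joint moment sequence obeying a Carleman growth bound, and conclude \eqref{tmaind}--\eqref{tmainmom} by the method of moments.'' That Carleman bound is false. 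For $\gs=1\dotsm k$ one has $\gL_{1\dotsm k}=c_k\intoi\ex(x)^{k-1}\dd x$, and by \eqref{ulk2} the moments satisfy $\bigpar{\E Z_{k-1}^r}^{1/r}\sim z_{k-1}\,r^{(k-1)/2}$, so the Carleman condition $\sum_m\bigpar{\E Z_{k-1}^m}^{-1/2m}=\infty$ fails as soon as $k-1\ge5$; the paper states explicitly (\refE{Epsi1...k} and the remark after the proof of \refT{Tmain}) that the limit laws are very likely \emph{not} moment-determined for long patterns, which is exactly why it does not — and cannot — use the method of moments to prove \eqref{tmaind}.

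Consequently your architecture needs the paper's second, independent ingredient: a direct proof of distributional convergence. You gesture at it (the excursion expression for $\gL_\gs$, the $O(\sqrt n)$ cost of each ascent), but in your plan these serve only as heuristics guiding the moment recursion, while the conclusion \eqref{tmaind} rests on moment determinacy. To close the gap you must upgrade the heuristic to a proof as in \refT{Tpsi}: using \refL{Ldl} and the Skorohod coupling, write $\ns(\pinzzz)$ as a sum over choices of ``black'' (descent-top) vertices of a polynomial in left heights $h_L(\nu_i)$ and path minima $h_L([\nu_i,\nu_j])$, with explicit $O(n^{D-1}H^{k-D+1})$ error terms, and pass to the continuous functional $\Psi_\gs(\ex)$, jointly in $\gs$. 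Then the moment asymptotics you derived by recursion imply uniform integrability of all powers, so the moment limits are identified as $\E\bigpar{\gL_{\gspp1}\dotsm\gL_{\gspp M}}$ — the direction of inference is from distributional convergence plus moment convergence to \eqref{tmainmom}, not from moments to \eqref{tmaind}. This also repairs your positivity and non-degeneracy claims, which presuppose that the limit \emph{is} the excursion functional: that identification is available only once the functional-limit argument is actually carried out. (Your alternative non-degeneracy route via $\Var\gL_\gs>0$ from the recursion would in principle work for any single $\gs$ but would be painful to verify for all $\gs$ at once; the paper instead reads non-degeneracy off the explicit form of $\Psi_\gs$.)
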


\begin{remark}
Since $\gL_\gs>0$, the limit distributions are \emph{not} normal;
thus $n_\gs(\pinzzz)$ is not asymptotically normal.
(For  $\gs=\gski$, use \eqref{rk1} below.)  
This was conjectured (for $\gs=312$) in \cite{SJ287} based on calculation of
the moments for small $n$; our theorem verifies this, but it should be noted
that the numerical values in \cite[Table 3]{SJ287} for $n\le20$ are still
far from their limits. A calculation using \refT{Tmom3} shows that 
the normalized third moment 
$\E(X-\E X)^3/\Var(X)^{3/2}\approx 0.76384$  % 0.7638368990$
for the limit  $X=\gL_{312}$,
while for $n=20$, \cite{SJ287} yields $0.44906$.
\end{remark}

The proof of \refT{Tmain} will occupy the rest of the paper. 
We will use two completely different methods that complement each other and
prove different parts of the theorem; both use a bijection with binary trees
described in \refS{Sbinary}.
One method (\refS{Sbrown} and \refT{Tpsi}) uses this to show the convergence
in distribution \eqref{tmaind}; this proof shows also
that the limit random variables
$\gL_\gs$ can be expressed as functionals of a Brownian excursion $\ex(x)$.
In particular (\refE{Epsi12}), $\gL_{12}=\sqrt2\intoi\ex(x)\dd x$; this is
(apart from the factor $\sqrt2$) 
the well-known \emph{Brownian excursion area} which appears as a limit in
various 
combinatorial problems
(for instance for the total path length in a random conditioned
Galton--Watson tree  \cite{AldousII,AldousIII});
for this distribution see also the survey
\cite{SJ201} and the references there.
(It is sometimes called the \emph{Airy distribution}.)
More generally (\refE{Epsi1...k}), for the monotone pattern $\gsik$, 
$\gL_{\gsik}=c_k\intoi \ex(x)^{k-1}\dd x$ with $c_k=2^{(k-1)/2}/(k-1)!$.
However, in general, the description as a Brownian excursion functional is
rather complicated, and it is not easy to even compute its mean.

As a complement, we therefore give also by another method (\refS{Sexp})
formulas  
yielding (by recursion) the constants $A_\gs=\E \gL_\gs$, see
\eqref{ags} and \eqref{eegs}; we describe also 
(\refS{Shigher}) how one
can similarly find also limits for higher moments (possibly mixed).
This method uses a recursion for the numbers $\ns(\pi)$ that is given in
\refS{Srec}, 
and a probabilistic argument using subcritical Galton--Watson trees.
As examples, we give (\refT{Tmom3}) explicit
recursion relations for the moments of
$\gL_\gs$ for $|\gs|\le3$ (and joint moments of $\gL_{12}$ and $\gL_\gs$
with $|\gs|=3$, needed for the recursions).
In particular, \refT{Tmom3} yields for the second moments
(where \eqref{e2gl12} is well-known, see \cite{Louchard}, \cite{SJ201})
\begin{align}
  \E \gL_{12}^2&=\frac56 \label{e2gl12},
&
\Var \gL_{12} &= \frac{10-3\pi}{12},
\\
  \E \gL_{123}^2&=\frac{19}{60},
&
\Var \gL_{123} &= \frac{1}{15},
\\
  \E \gL_{213}^2&=\frac{7}{120},
&
\Var \gL_{213} &= \frac{56-15\pi}{960},
\\
  \E \gL_{231}^2=
  \E \gL_{312}^2
&=\frac{43}{840},
&
\Var \gL_{231} =
\Var \gL_{312} 
&= \frac{344-105\pi}{6720}.
\end{align}
For mixed moments we find from \refT{Tmom3} for example
\begin{align}
  \E\bigpar{\gL_{12}\gL_{213}} &= \frac{13}{60},
\\
  \E\bigpar{\gL_{12}\gL_{231}} =
  \E\bigpar{\gL_{12}\gL_{312}} &= \frac{1}{5}.
\end{align}
The matrix of second moments of $(\gL_{213},\gL_{231},\gL_{312})$ is given in
\eqref{ecov3}.

\begin{remark}\label{Rbonadom}
  For a given $|\gs|=k$, we see that the order of $\E \ns(\pinzzz)$ is 
smallest ($n^{(k+1)/2}$) for $\gs=1\dotsm k$
and largest ($n^k$)  for $\gs=k\dotsm 1$.
Cf.\ the related result by \citet{Bona-abscence} that for every $n$,
$\E n_{1\dotsm k}(\pinzzz) \le \E n_{\gs}(\pinzzz)
\le \E n_{k\dotsm 1}(\pinzzz)$ for every $\gs\in\fSkzzz$, see \refS{Sfurther}.	
\end{remark}

\begin{remark}\label{Rk1}
In particular, \eqref{tmaine} implies that $\E \ns(\pinzzz)/n^k\to0$ for
every $\gs\in\fS_k$ except $k\dotsm1$, which by \eqref{11} trivially implies
$\E n_{k\dotsm1}(\pinzzz)\sim \binom nk$ and 
$n_{k\dotsm1}(\pinzzz)\pto 1/k!$, which is the case $\gs=k\dotsm1$ of
\refT{Tmain} with $\gL_{k\dotsm1}=1/k!$ deterministic as asserted in the
theorem.

For a nondegenerate limit law also in this case (for $k>1$), 
note that the same argument
yields
\begin{equation}\label{rk1}
n^{-(k-1/2)} \lrpar{\binom nk -n_{k\dotsm1}(\pinzzz)}
\dto \sum \gL_\gs,
\end{equation}
summing over all $\gs\in\fSkzzz$ with $\gl(\gs)=2k-1$ (i.e., $D(\gs)=k-1$).
\end{remark}

\begin{remark}
  Although the exponent in \eqref{tmaine} depends only on $\gl(\gs)$, \ie,
  on $|\gs|$ and the number of  descents in $\gs$, the constant $A_\gs$
  does not. For example, it follows by 
\eqref{ags} and \eqref{eegs}, or by \refE{Eed4} and \refL{Lsing}, that
$\E n_{3214}\sim \frac{\sqrt\pi}{32}n^{7/2}$ and
$\E n_{3241}\sim \frac{\sqrt\pi}{64}n^{7/2}$.
\end{remark}

\begin{remark}
  Apart from the relation \eqref{11}, there are also simple relations between
  the counts $n_\gs(\pi)$ for $\gs$ of different lengths. For example, 
\begin{equation}\label{bysans}
 (n-2) n_{12}(\pi)
=3n_{123}(\pi)+ 2n_{132}(\pi)+ 2n_{213}(\pi)+n_{231}(\pi)+n_{312}(\pi),
\end{equation}
since the \lhs{} counts the number of distinct $i,j,k$ such that $i<j$ and
$\pi_i<\pi_j$, and if $\gs\in\fS_3$, then each occurence of $\gs$ in $\pi$ 
contributes $n_{12}(\gs)$ such triples.

For $\pi\in\fSn(132)$, the term $n_{132}(\pi)$ vanishes, and if we divide by
$n^{5/2}$ and take the limit, another term disappears asymptotically, and we
find for the limit variables the relation
\begin{equation}\label{rom}
  \gL_{12}
= 2\gL_{213}+\gL_{231}+\gL_{312}.
\end{equation}
Similar relations enable each $\gL_\gs$ to be expressed in $\gL_{\gs'}$ for
some set of $\gs'$ with $|\gs'|=|\gs|+1$.
\end{remark}

\begin{remark}
  The limit $\gL_{213}$ and the sum $\gL_{231}+\gL_{312}$ have appeared
  earlier as  distribution limits in \cite{SJ146}, see \refR{Rwiener}.
\end{remark}

\section{A basic recursion}\label{Srec}

If $x_1\dotsm x_n$ is any sequence of distinct numbers, let $\Pi(x_1\dotsm
x_n)$ be the permutation in $\fS_n$ that has the same order as $x_1\dotsm x_n$.
We extend the notation $n_\gs(\pi)$ in the trivial way to arbitrary
sequences of distinct numbers $x_1\dotsm x_n$ and $y_1\dotsm y_k$ by
$n_{y_1\dotsm y_k}(x_1\dotsm x_n)
:= n_{\Pi(y_1\dotsm y_k)}(\Pi(x_1\dotsm x_n))$.
(We may similarly extend other notations when convenient.)
We also define
$n_\emptyset(x_1\dotsm x_n)=0$ for an empty string
$\emptyset$
(\ie, the case $k=0$), and let $\fS_0:=\set{\emptyset}$.

If $\pi\in\fS_n$ and $\ell$ is the index of the maximal element $n$,
\ie. $\pi_\ell=n$, let $\pi_L:=\pi_1\dotsm\pi_{\ell-1}$ and
$\pi_R:=\pi_{\ell+1}\dotsm\pi_n$ be the (possibly empty)
parts of $\pi$ before and after the maximal element.
Using the operator $\Pi$ above, we can regard them as permutations
$\pi_L\in\fS_{\ell-1}$ and $\pi_R\in\fS_{n-\ell}$.

We begin with a well-known characterization of the 132-avoiding
permutations,
see \eg{} \citet{Bona-abscence}.
\begin{lemma}%[E.g.~\citet{Bona-abscence}] 
  \label{L132}
With notations as above, a permutation $\pi$ avoids $132$ if and only if
$\pi_L$ and $\pi_R$ both avoid $132$ and furthermore
$\pi_i>\pi_j$ whenever $i<\ell$ and $j>\ell$.
\end{lemma}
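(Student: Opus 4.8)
The plan is to prove both directions using the decomposition $\pi = \pi_L\, n\, \pi_R$ at the maximal element $n=\pi_\ell$, and to reduce the presence or absence of a $132$-pattern to the three possible ways a potential occurrence can straddle the position $\ell$. The key observation is that since $n$ is the largest value, it can only play the role of the ``$3$'' in a $132$-pattern (the middle letter of the pattern is the largest), and it can do so only if there are indices $i<\ell<j$ with $\pi_i<\pi_j$. This already hints that the ``furthermore'' condition $\pi_i>\pi_j$ for all $i<\ell<j$ is exactly what rules out patterns using $n$ as the peak.

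First I would prove the forward direction by contraposition: suppose the right-hand side fails. If $\pi_L$ contains a $132$, then that occurrence lies entirely among indices $<\ell$ and is a $132$ in $\pi$; similarly if $\pi_R$ contains one. If instead both $\pi_L$ and $\pi_R$ avoid $132$ but there exist $i<\ell<j$ with $\pi_i<\pi_j$, then the triple $(\pi_i,\, \pi_\ell,\, \pi_j)=(\pi_i,\, n,\, \pi_j)$ has order type $132$, since $\pi_i<\pi_j<n$. In every case $\pi$ contains $132$, proving that avoidance of $132$ by $\pi$ forces the right-hand side.

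For the converse, assume $\pi_L$ and $\pi_R$ both avoid $132$ and that $\pi_i>\pi_j$ whenever $i<\ell<j$; I must show $\pi$ avoids $132$. Consider any subsequence $\pi_a\pi_b\pi_c$ with $a<b<c$ forming a $132$, so $\pi_a<\pi_c<\pi_b$. The main step is a case analysis according to how $a,b,c$ are placed relative to $\ell$. Because the middle letter $\pi_b$ is the largest of the three, and $\pi_\ell=n$ is the global maximum, the position $\ell$ can appear among $\{a,b,c\}$ only as the index $b$ (if $\ell\in\{a,c\}$ then $\pi_a$ or $\pi_c$ would equal $n$, contradicting that it is not the largest of the triple). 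One then checks: if all three indices are $<\ell$ or all three are $>\ell$, the pattern lies in $\pi_L$ or $\pi_R$, contradicting avoidance; if $b=\ell$, then $a<\ell<c$ with $\pi_a<\pi_c$, contradicting the hypothesis $\pi_a>\pi_c$; the remaining mixed cases with $\ell\notin\{a,b,c\}$ (some indices on each side of $\ell$) each produce a pair $i<\ell<j$ from within $\{a,b,c\}$ whose values are in increasing order, again contradicting $\pi_i>\pi_j$. Hence no $132$ occurs.

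The only mildly delicate point---and the part I expect to need the most care---is the bookkeeping in this last case analysis: one must verify that \emph{every} mixed placement of $a<b<c$ across $\ell$ (not passing through $\ell$ itself) contains two indices on opposite sides of $\ell$ whose values are increasing, so that the ``furthermore'' hypothesis applies. This follows because in a $132$-pattern one has $\pi_a<\pi_c$ with $a<c$, and if $a<\ell<c$ this pair alone is the desired increasing straddling pair; the cases where $b$ lies on the opposite side from $a$ or $c$ reduce similarly. Everything else is immediate from the definitions, and the two directions together give the stated equivalence.
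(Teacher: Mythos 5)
Your proof is correct and follows the same route as the paper's (sketched) argument: the forward direction by exhibiting $\pi_i\pi_\ell\pi_j$ as a $132$ when the straddling condition fails, and the converse by a case analysis on the positions of a putative occurrence relative to $\ell$, where the key point in every mixed case is that the first and last indices $a<c$ straddle $\ell$ and satisfy $\pi_a<\pi_c$. The paper omits the details of the converse; your write-up supplies exactly the bookkeeping it alludes to.
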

\begin{proof} %\marginal{omit?}
Although this is well-known and easy, we sketch the proof for completeness.

  If $\pi$ avoids 132 then so do $\pi_L$ and $\pi_R$. Furthermore, if the
  final condition in the lemma is violated, then $\pi_i<\pi_j<\pi_\ell$ for
  some $i$ and $j$ with $i<\ell<j$, and thus $\pi_i\pi_\ell\pi_j$ is an
  occurrence of 132.

The converse is just as easy, by considering the possible positions of an
occurrence of 132 in relation to $\ell$; we omit the details.
\end{proof}

This leads to a basic recursion for $\ns(\pi)$.
\begin{lemma}
  \label{LR}
Let $\gs\in\fS_k(132)$ with $k\ge1$.
Define $m$ by $\gs_m=k$ and let 
$\gD:=\set{q\in[k-1]:\min_{1\le i\le q}\gs_i>\max_{q<j\le k}\gs_j}$.
Then, for any permutation $\pi\in\fSnzzz$ with $n\ge1$,
\begin{multline}\label{lr}
\ns(\pi)
=
\ns(\pi_L)+\ns(\pi_R)+\sum_{q\in \gD}
 n_{\gsxx1q}(\pi_L)n_{\gsxx{q+1}k}(\pi_R)
\\
+n_{\gsxx1{m-1}}(\pi_L)n_{\gsxx{m+1}k}(\pi_R).
 \end{multline}
\end{lemma}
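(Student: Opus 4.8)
The plan is to classify every occurrence of $\gs$ in $\pi$ according to how its $k$ positions sit relative to the position $\ell$ of the global maximum $\pi_\ell=n$, and to read off the contributions using the product structure forced by \refL{L132}. The key input is that, by \refL{L132}, every entry of $\pi_L$ exceeds every entry of $\pi_R$, while $\pi_\ell=n$ exceeds all of them; consequently, for any chosen subsequence the entries it takes from $\pi_L$ are all larger than the entries it takes from $\pi_R$, no matter which positions are selected. Applying the same lemma to $\gs$ itself, which avoids $132$, the entries $\gs_1,\dots,\gs_{m-1}$ lying to the left of the maximum $\gs_m=k$ all exceed the entries $\gs_{m+1},\dots,\gs_k$ lying to its right.

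First I would count the occurrences that use position $\ell$. Since $\pi_\ell=n$ is then the maximal entry of the chosen subsequence, it must play the role of the maximal entry $\gs_m=k$ of $\gs$, which forces exactly $m-1$ of the positions into $\pi_L$ and $k-m$ into $\pi_R$, the former forming a copy of $\gsxx1{m-1}$ and the latter a copy of $\gsxx{m+1}k$. The comparisons across the two blocks are automatically correct, since the domination of $\pi_L$ over $\pi_R$ matches the relation $\gs_1,\dots,\gs_{m-1}>\gs_{m+1},\dots,\gs_k$ noted above. Hence these occurrences are in bijection with pairs consisting of a copy of $\gsxx1{m-1}$ in $\pi_L$ and a copy of $\gsxx{m+1}k$ in $\pi_R$, and are counted by $n_{\gsxx1{m-1}}(\pi_L)\,n_{\gsxx{m+1}k}(\pi_R)$, the last term of \eqref{lr}.

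Next I would count the occurrences that avoid position $\ell$, grouped by the number $q$ of their positions lying in $\pi_L$ (so $k-q$ lie in $\pi_R$). The extreme values $q=k$ and $q=0$ contribute precisely $\ns(\pi_L)$ and $\ns(\pi_R)$. For $0<q<k$ the subsequence again has its first $q$ values above its last $k-q$ values, so it is an occurrence of $\gs$ exactly when the first $q$ positions form $\gsxx1q$, the last $k-q$ form $\gsxx{q+1}k$, and in addition $\gs_1,\dots,\gs_q$ all exceed $\gs_{q+1},\dots,\gs_k$; this final condition is precisely $q\in\gD$. When $q\in\gD$ the cross-block comparisons are again forced to agree, so these occurrences factor as $n_{\gsxx1q}(\pi_L)\,n_{\gsxx{q+1}k}(\pi_R)$, whereas when $q\notin\gD$ there are none. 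Summing over $q\in\gD$ produces the middle sum of \eqref{lr}, and assembling the four contributions gives the identity.

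The one genuine point in the argument is to use the total domination of $\pi_L$ over $\pi_R$ to see that all cross-block order relations are predetermined, so that each class of occurrences factors as a product with no surviving ``mixed'' constraint, and then to recognize that consistency with $\gs$ holds for the no-$\ell$ occurrences precisely when $q\in\gD$ and, for the $\ell$-occurrences, precisely at the split $m$ supplied by \refL{L132} applied to $\gs$. What is left is bookkeeping at the degenerate splits $m=1$ or $m=k$, where one factor becomes a count of the empty pattern, read as $1$; as a sanity check, for $k=1$ the identity \eqref{lr} collapses to $n=(\ell-1)+(n-\ell)+1$.
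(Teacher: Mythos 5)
Your proof is correct and follows essentially the same route as the paper's: classify occurrences of $\gs$ by the positions they occupy relative to $\ell$, use Lemma~\ref{L132} to see that every cross comparison between entries taken from $\pi_L$ and from $\pi_R$ is forced, so that the occurrences avoiding $\pi_\ell$ with $0<q<k$ positions on the left exist exactly when $q\in\gD$ and then factor as $n_{\gsxx1q}(\pi_L)n_{\gsxx{q+1}k}(\pi_R)$, while occurrences through $\pi_\ell$ must send $\pi_\ell$ to $\gs_m$ and factor likewise. Your two small elaborations --- invoking Lemma~\ref{L132} for $\gs$ itself to check the cross comparisons in the $\ell$-case, and reading the empty-pattern count as $1$ at the degenerate splits $m=1$ or $m=k$ (which is indeed what \eqref{lr} requires, notwithstanding the convention $n_\emptyset=0$ stated in Section~\ref{Srec}) --- only make explicit what the paper leaves implicit.
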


\begin{proof}
  Consider first an occurrence $\pi_{\nu_1}\dotsm\pi_{\nu_k}$ of $\gs$ that does
  not include $\pi_\ell$. 
Then, for some $q\in\set{0,\dots,k}$,
$\nu_1<\dotsm <\nu_q<\ell < \nu_{q+1}<\dotsm <\nu_k$.

The cases $q=k$ and $q=0$ give the $\ns(\pi_L)$ and $\ns(\pi_R)$ occurrences
in $\pi_L$ and $\pi_R$. 

If $1\le q\le k-1$, we note that by \refL{L132}, if $i\le q$ and $j>q$, then
$\pi_{\nu_i}>\pi_{\nu_j}$ and thus $\gs_{i}>\gs_{j}$; hence $q\in\gD$.
Furthermore, for every $q\in\gD$, we have excatly one such occurrence
$\gs$ in $\pi$ for every pair of occurrences 
of $\gsxx{1}{q}$ in $\pi_L$ and $\gsxx{q+1}k$ in $\pi_R$. The total number
of such occurrences is thus the sum in \eqref{lr}.

Finally, if an occurrence $\pi_{\nu_1}\dotsm\pi_{\nu_k}$ of $\gs$ contains
$\pi_\ell=n$, then $\pi_\ell$ must correspond to the largest element $\gs_m$
in $\gs$, \ie{} $\nu_m=\ell$.
It follows in the same way as above that the number of such occurrences is 
$n_{\gsxx1{m-1}}(\pi_L)n_{\gsxx{m+1}k}(\pi_R)$.
\end{proof}

The set $\gD$ is empty if $m=k$; otherwise $m\in\gD$ by \refL{L132} so
$\gD\neq\emptyset$. The extreme case is $\gs=k\dotsm1$ when $\gD=[k-1]$.
Note that every element of $\gD$ is a descent in $\gs$ (but not conversely,
in general).

\section{Binary trees}\label{Sbinary} 

Out proofs are based on a well-known bijection between $\fS_{n}(132)$ and
the set $\cB_n$ of binary trees of order $n$, see \eg{} \cite{Bona-surprising}.
%A brief description is as follows. 
It can be defined as follows.

Recall that a binary tree $T$ consist of
a root and two subtrees $T_L$ and $T_R$ (the \emph{left} and \emph{right}
subtree) which are either empty or themselves binary trees.
Using the notations of \refS{Srec}, we define recursively for any
permutation $\pi\in\fS_n(132)$ with $n\ge1$ a binary tree 
$T=T(\pi)\in\cB_n$ such that its left subtree $T_L=T(\pi_L)$ and its right
subtree $T_R=T(\pi_R)$; furthermore, $T(\emptyset)$ is the empty tree.
It is easy to see that this yields a bijection between $\fS_n(132)$ and $\cB_n$.

If $T$ is a binary tree, and $\gs$ is a permutation, let
$X_\gs(T):=\ns(\pi_T)$, where $\pi_T\in\fS(132)$ is the permutation
corresponding 
to $T$ by the bijection above.
Moreover, let $X_{\gs,L}:=X_{\gs}(T_L)$ and $X_{\gs,R}:=X_{\gs}(T_R)$, where
$L$ and $R$ are the left and right subtrees of $T$.

We can translate the recursion \refL{LR} to 
recursive relations for the variables $X_\gs=X_\gs(T)$ as follows.
(We usually omit the argument $T$ for notational convenience.)

\begin{lemma}
  \label{LRT}
Let\/ $\gs\in\fS_k(132)$ with $k\ge1$ and
define $m$ and $\gD$ as in \refL{LR}.
%by $\gs_m=k$ and let 
%$\gD:=\set{q\in[k-1]:\min_{1\le i\le q}\gs_i>\max_{q<j\le k}\gs_j}$.
Then, 
for any binary tree $T$,
\begin{multline}\label{lrt}
X_{\gs}
=
X_{\gs,L}+X_{\gs,R}
+\sum_{q\in \gD}
 X_{\gsxx1q,L}X_{\gsxx{q+1}k,R}
\\
+X_{\gsxx1{m-1},L}X_{\gsxx{m+1}k,R}.
 \end{multline}
\qed
\end{lemma}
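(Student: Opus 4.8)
The statement is essentially an immediate translation of the combinatorial recursion in Lemma~\ref{LR} through the bijection $\pi\mapsto T(\pi)$ of Section~\ref{Sbinary}, so the plan is to reduce everything back to \refL{LR} by unwinding the definitions. First I would recall that every binary tree $T\in\cB_n$ is of the form $T=T(\pi)$ for a unique $\pi\in\fSnzzz$, and that the defining property of the bijection is $T_L=T(\pi_L)$ and $T_R=T(\pi_R)$, where $\pi_L,\pi_R$ are the parts of $\pi$ to the left and right of its maximal entry as in \refS{Srec}. By the very definition $X_\gs(T)=\ns(\pi_T)$, so in particular $X_\gs=\ns(\pi)$, and moreover $X_{\gs,L}=X_\gs(T_L)=\ns\bigpar{\pi_{T_L}}=\ns(\pi_L)$, and similarly $X_{\gs,R}=\ns(\pi_R)$, since $\pi_{T_L}=\pi_L$ and $\pi_{T_R}=\pi_R$ by the bijection's recursive definition.

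With these identifications in place, the proof is a direct substitution. Applying the same reasoning to the subpermutations $\gsxx1q$, $\gsxx{q+1}k$, $\gsxx1{m-1}$ and $\gsxx{m+1}k$ gives $X_{\gsxx1q,L}=n_{\gsxx1q}(\pi_L)$, $X_{\gsxx{q+1}k,R}=n_{\gsxx{q+1}k}(\pi_R)$, and likewise for the two pieces $\gsxx1{m-1}$ and $\gsxx{m+1}k$. Substituting all of these into the recursion \eqref{lr} of \refL{LR} converts each term on its right-hand side into the corresponding term of \eqref{lrt}; the indices $m$ and $\gD$ are the same in both lemmas, since they are defined identically in terms of $\gs$ alone and do not depend on $\pi$ or $T$. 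This yields \eqref{lrt} for every tree $T$, because every $T$ arises as $T(\pi)$ for some $\pi\in\fSnzzz$.

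There is essentially no obstacle here beyond bookkeeping; the content of the lemma lies entirely in \refL{LR}, and the only thing to verify carefully is that the bijection genuinely satisfies $\pi_{T_L}=\pi_L$ and $\pi_{T_R}=\pi_R$, so that the $X$-variables attached to the left and right subtrees coincide with the $n_\gs$-counts of the left and right parts of $\pi$. One small point worth flagging is the degenerate case $n=0$ (the empty tree): then $X_\gs=0$ for all $\gs$, and \eqref{lrt} holds trivially with both subtrees empty, matching the convention $n_\emptyset=0$ fixed in \refS{Srec}. Since the excerpt already supplies \refL{LR} with a complete proof, I would present the argument for \refL{LRT} as a one-line verification, which is exactly why the author marks it with \qed immediately after the statement.
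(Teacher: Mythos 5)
Your proposal is correct and matches the paper exactly: the paper gives no separate proof of Lemma~\ref{LRT}, treating it (as you do) as an immediate restatement of Lemma~\ref{LR} under the bijection $T\mapsto\pi_T$, using $T(\pi)_L=T(\pi_L)$ and $T(\pi)_R=T(\pi_R)$ together with $X_\gs(T)=\ns(\pi_T)$. The one inaccuracy is your aside about the empty tree: \eqref{lrt} does not hold there (for $\gs=1$ it would read $0=0+0+1$, cf.\ \eqref{rn}), but that case is implicitly excluded exactly as $n\ge1$ is required in Lemma~\ref{LR}, so nothing is lost.
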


Note also that 
$X_\gs=0$ unless $\gs\in\fSxzzz$ and, by \eqref{11},
\begin{equation}\label{11t}
  \sum_{\gs\in\fS_k} X_\gs = \binom nk.
\end{equation}

As an illustration and for later use, we write the recursion \eqref{lrt}
explicitly for some small $\gs$. 
For (notational) convenience, we  define
 $N=N(T):=X_1(T)=|T|$
and $Y=Y(T):=X_{12}(T)$, and define $N_L, N_R,
Y_L, Y_R$ correspondingly.
Note that then,
by \eqref{11t}, 
\begin{equation}\label{x21}
X_{21}=\binom N2-X_{12} =\binom N2-Y  .
\end{equation}

\begin{example}\label{Erec3} 
Taking $\gs=1$, 12, 123, 213, 231, 312
in \refL{LRT} we find the following recursions, 
noting that in these cases $\gD=\gD_\gs$ is empty except $\gD_{231}=\set2$ and 
$\gD_{312}=\set1$;
for \eqref{r213} we use also \eqref{x21}.
\begin{align}
N&=N_L+N_R+1, \label{rn}
\\
  Y&=Y_L+Y_R+N_L,  \label{ry}
\\
X_{123}&=X_{123,L}+X_{123,R}+Y_L,  \label{r123}
\\
X_{213}&=X_{213,L}+X_{213,R}+\binom{N_L}2-Y_L,  \label{r213}
\\
X_{231}&=X_{231,L}+X_{231,R}+Y_LN_R+N_LN_R, \label{r231}
\\
X_{312}&=X_{312,L}+X_{312,R}+N_LY_R+Y_R. \label{r312}
\end{align}
(These recursions can also easily be verified directly, and
\eqref{rn} is utterly trivial.)
\end{example}

Let $T_n$ be a uniformly random binary tree in $\cB_n$.
Note that $T_n$ by the bijection above corresponds to  a uniformly random
permutation in $\fS_n(132)$, \ie{} we can identify $T_n=T(\pinzzz)$.
With this identification and the notations above we have
\begin{equation}\label{X=n}
  X_\gs(T_n)=\ns(\pinzzz);
\end{equation}
we will in the sequel use this without comment and
study the random variables $X_\gs(T_n)$ when proving \refT{Tmain}.

\begin{remark}\label{Rbij}
  The bijection with $\cB_n$ is equivalent to a bijection with the set of
  Dyck paths of length $2n$, by the well-known standard bijection between
  the latter and $\cB_n$. This is equivalent to the bijection by
\citet[Exercises 2.2.1-3,5]{KnuthI} between $312$-avoiding permutations
and Dyck paths.
Another bijection with Dyck paths is given by
\cite{Krattenthaler}. For similar bijections of $\fS_n(123)$ and Dyck paths,
see \eg{} \cite{BJS}, \cite{Krattenthaler}, \cite{CEF}.
See also the many bijections with various objects in 
\citet[Exercise 6.19 (and its solution)]{StanleyII}. 
\end{remark}

\section{Expectations}\label{Sexp} 

We next  use an idea from \cite{SJ146} and 
consider the functionals 
$X_\gs$ above for 
another random binary tree $\td$ defined as follows, for $0<\gd<1$.
Note that this random tree, unlike $\tn$, has a random size.

We start with the root; we then add each of the two possible
children of the root with probability $p:=(1-\gd)/2$ each, and we continue
in the same way with the possible children of any node that we add to the tree,
with all random choices independent. Thus
$\td$ is a random Galton--Watson tree with offspring
distribution $\Bi(2,p)$. Since this offspring distribution has
expectation $2p=1-\gd<1$, the Galton--Watson tree $\td$ is subcritical and thus
\as{} finite.

The construction implies that if  $T=\td$, then the subtrees $T_L$ and $T_R$
are independent random trees; furthermore,
each of them empty with probability
$1-p=(1+\gd)/2$ and otherwise it has the same distribution as $T$.
(This can be used as an alternative, recursive definition of $\td$.)

\begin{remark}
  The argument in \cite{SJ146} uses full binary trees, which makes the
  details a little different although the main idea is the same. 
We thus present the argument in detail below,
  and refer the interested reader to \cite{SJ146} for comparisons.
\end{remark}

We let $\ed$ denote expectation of random variables defined for the random
tree 
$T=\td$. 
These expectations are generating functions in disguise.
In fact, let $Z=Z(T)$ be an arbitrary functional such that $|Z(T)|\le
C|T|^m$ for some constants $C$ and $m$. 
(This guarantees that all expectations and sums below converge, and is
satisfied by the functionals that we consider, \viz{} $X_\gs$ and products
of these.)
We write $z_n:=\E Z(T_n)$.  

\begin{lemma}\label{LedZ}
Let $Z$ and $z_n:=\E Z(T_n)$ be as above.  Then
\begin{equation}\label{sw2}
  \begin{split}
\ed Z 
&= \frac{1+\gd}{1-\gd} \sumni
z_n C_n  \parfrac{1-\gd^2}{4}^n.
  \end{split}
\end{equation}
\end{lemma}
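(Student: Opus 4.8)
The plan is to evaluate $\ed Z$ by summing over the possible shapes of the random tree $\td$, exploiting the fact that the Galton--Watson measure is \emph{uniform} on binary trees of any fixed order. First I would fix a tree $T\in\cB_n$ and compute $\PP(\td=T)$ directly from the recursive construction. There each node offers two independent slots (left and right), each filled by a child with probability $p=(1-\gd)/2$ and left empty with probability $1-p=(1+\gd)/2$; the tree produced equals $T$ exactly when every filled slot of $T$ is filled and every empty slot is left empty. A rooted binary tree with $n$ nodes has $n-1$ edges (one per non-root node), hence $n-1$ filled slots, while the total number of slots is $2n$, leaving $2n-(n-1)=n+1$ empty slots. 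Consequently
\[
\PP(\td=T)=p^{n-1}(1-p)^{n+1},
\]
which depends only on $n=|T|$ and not on the shape of $T$; this shape-independence is the crucial point.

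Next I would write $\ed Z=\sum_{T}Z(T)\,\PP(\td=T)$ and group the sum according to the order $n=|T|$. Since the probability factor depends only on $n$,
\[
\ed Z=\sumni p^{n-1}(1-p)^{n+1}\sum_{T\in\cB_n}Z(T).
\]
Because $\tn$ is uniform over the $C_n=|\cB_n|$ trees of order $n$, the inner sum equals $C_n\,\E Z(\tn)=C_n z_n$, giving $\ed Z=\sumni z_n C_n\,p^{n-1}(1-p)^{n+1}$.

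Finally I would substitute $p=(1-\gd)/2$ and $1-p=(1+\gd)/2$ and simplify the prefactor:
\[
p^{n-1}(1-p)^{n+1}=\frac{(1-\gd)^{n-1}(1+\gd)^{n+1}}{2^{2n}}=\frac{1+\gd}{1-\gd}\parfrac{1-\gd^2}{4}^n,
\]
which yields the asserted identity \eqref{sw2}. Absolute convergence of the series, which also justifies interchanging the expectation with the sum over shapes, follows from the hypothesis $|Z(T)|\le C|T|^m$: then $|z_n|\le Cn^m$, while $C_n\sim 4^n/(\sqrt\pi\,n^{3/2})$, so a generic term is $O\bigpar{n^{m-3/2}(1-\gd^2)^n}$ and decays geometrically since $0<1-\gd^2<1$. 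I do not anticipate a genuine obstacle here; the only step requiring care is the slot count that produces the exponents $n-1$ and $n+1$, since a miscount there would corrupt the prefactor $\tfrac{1+\gd}{1-\gd}$. Everything else is elementary bookkeeping and the algebraic simplification displayed above.
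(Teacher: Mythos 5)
Your proposal is correct and follows essentially the same route as the paper: compute $\PP(\td=T)=p^{n-1}(1-p)^{n+1}$ by counting the $n-1$ occupied and $n+1$ empty child slots, observe that this depends only on $|T|=n$ so that $\td$ conditioned on its size is uniform on $\cB_n$, and sum over $n$. Your explicit verification of absolute convergence via $|z_n|\le Cn^m$ and $C_n\sim 4^n/(\sqrt\pi\,n^{3/2})$ is a small addition the paper handles by a remark preceding the lemma, but the argument is the same.
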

\begin{proof}
  There are $C_n=\binom{2n}n/(n+1)$ trees in $\cB_n$.
If $T\in\cB_n$, then $T$ has $n$ nodes, with 2
  potential children each. Of these $2n$ potential children, $n-1$ exist and
  $n+1$ do not exist. The probability that $\td$ equals a given tree
  $T\in\cB_n$ is thus 
  \begin{equation}\label{sww}
\PP(\td = T) = p^{n-1}(1-p)^{n+1}= 2^{-2n}(1-\gd)^{n-1}(1+\gd)^{n+1}.	
  \end{equation}
This probability is the same for all $T\in \cB_n$, 
and since $|\cB_n|=C_n$, it follows that
the probability that $\td$ has order $n$ is
\begin{equation}\label{sw1}
\PP(|\td|=n)= \PP(\td\in\cB_n)= C_n 2^{-2n}(1-\gd)^{n-1}(1+\gd)^{n+1}
= C_n \frac{1+\gd}{1-\gd} \parfrac{1-\gd^2}{4}^n.
\end{equation}
Moreover, 
since \eqref{sww} does not depend on the choice of $T\in\cB_n$, we see that
conditioned on $|\td|=n$, $\td$ is uniformly distributed in
$\cB_n$;
in other words $\bigpar{\td\mid|\td|=n}\eqd T_n$.
Hence,
$\E \bigpar{Z\mid |\td|=n} =\E Z(T_n)=z_n$ and, using \eqref{sw1},
\begin{equation*}
  \begin{split}
\ed Z& = \sumni \PP(|\td|=n)\E (Z\mid |\td|=n) 	
 = \sumni \PP(|\td|=n) z_n
\\
&=\sumni
z_n C_n \frac{1+\gd}{1-\gd} \Bigparfrac{1-\gd^2}{4}^n.
\qedhere
  \end{split}
\end{equation*}
\end{proof}

By \refL{LedZ}, $\ed Z$ is, apart from the factor $(1+\gd)/(1-\gd)$, the ordinary
generating function of the sequence $C_nz_n$, evaluated at $(1-\gd^2)/4$.
Conversely, by taking $\gd=\sqrt{1-4x}$ in \eqref{sw2}, we obtain,
for $0<x<1/4$,
\begin{equation}\label{sw3}
\sumni C_nz_nx^n = \frac{1-\sqrt{1-4x}}{1+\sqrt{1-4x}} \E_{\sqrt{1-4x}} Z
=\frac{1-2x-\sqrt{1-4x}}{2x} \E_{\sqrt{1-4x}} Z.
\end{equation}
Note that $Z=1$ yields the well-known \gfx{} for the Catalan numbers, see
\eg{} \cite[p.~35]{FS}.

\begin{remark}\label{Rrational}
 For the variables $Z$ that we study below (products of $X_\gs$), 
$\ed Z$ turns out to be a polynomial
 in $\gd\qw$; in this case \eqref{sw3} yields the
 generating function $\sumni C_nz_nx^n$ as a rational function of
 $\sqrt{1-4x}$. By analytic continuation, the resulting formula is valid for
 all complex $x$ with $|x|<1/4$, and the generating function extends to an
 analytic function in
 $\bbC\setminus[1/4,\infty)$.
\end{remark}

We can now apply  singularity analysis and obtain asymptotics of $z_n$ from
asymptotics of $\ed Z $ as $\gd\downto0$. (Note that although we can define
the random tree $T_\gd$ for $\gd=0$, 
which will be a critical Galton--Watson
tree and thus \as{} finite,  the expectations that we are interested will
all be infinite and of no use to us; hence we consider $\gd>0$ and take
asymptotics.) 
We state a simple case that is enough for our purposes.
We let in this section (and the next)
$O(\gd^{-m})$ denote an arbitrary \emph{polynomial in
$\gd\qw$} of degree at most $m$. 

\begin{lemma}
\label{Lsing}
If\/ $\ed Z=a\gdx{m}+\ogdx{(m-1)}$, where $m\ge1$ and $a\neq0$, then
\begin{equation*}
\E Z(T_n)\sim a\frac{\Gamma(1/2)}{\Gamma(m/2)}n^{(m+1)/2}
\qquad\text{as }\ntoo.
\end{equation*}
\end{lemma}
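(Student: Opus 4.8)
The plan is to read off the asymptotics of $z_n=\E Z(\tn)$ from the behavior of the generating function $f(x):=\sumni C_n z_n x^n$ near its dominant singularity $x=1/4$, using the transfer (singularity analysis) machinery of \cite[Chapter VI]{FS}. The starting point is the identity \eqref{sw3}, derived from \refL{LedZ}, which expresses $f$ in terms of $\ed Z$ under the substitution $\gd=\sqrt{1-4x}$.

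First I would simplify the prefactor in \eqref{sw3}. Writing $x=(1-\gd^2)/4$, one has $1-2x=(1+\gd^2)/2$ and $2x=(1-\gd^2)/2$, so that
\begin{equation*}
  \frac{1-2x-\sqrt{1-4x}}{2x}=\frac{(1+\gd^2)/2-\gd}{(1-\gd^2)/2}=\frac{(1-\gd)^2}{1-\gd^2}=\frac{1-\gd}{1+\gd},
\end{equation*}
which is exactly the reciprocal of the factor in \refL{LedZ}. Hence $f(x)=\frac{1-\gd}{1+\gd}\,\ed Z$ with $\gd=\sqrt{1-4x}$. Substituting the hypothesis $\ed Z=a\gd^{-m}+\ogdx{(m-1)}$ and using that $\frac{1-\gd}{1+\gd}=1+O(\gd)$ is analytic at $\gd=0$ while the error term is a polynomial in $\gd^{-1}$ of degree at most $m-1$, I would multiply out and keep the leading singular term to obtain, as $x\to 1/4$,
\begin{equation*}
  f(x)=a(1-4x)^{-m/2}+O\bigpar{(1-4x)^{-(m-1)/2}}.
\end{equation*}

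Next I would invoke the transfer theorem. By \refR{Rrational} (equivalently, because $\ed Z$ is a polynomial in $\gd^{-1}$), $f$ extends to an analytic function on $\bbC\setminus[1/4,\infty)$, so the singular expansion above holds in a suitable $\Delta$-domain with the error uniform. Applying $[x^n](1-4x)^{-\alpha}\sim 4^n n^{\alpha-1}/\gG(\alpha)$ for $\alpha\notin\bbZleo$ with $\alpha=m/2\ge1/2$, and noting the error term contributes a factor $n^{-1/2}$ smaller, I get
\begin{equation*}
  C_n z_n=[x^n]f(x)\sim a\,\frac{4^n\,n^{m/2-1}}{\gG(m/2)}.
\end{equation*}
Finally, dividing by the Catalan asymptotics $C_n\sim 4^n/(\sqrt\pi\,n^{3/2})$ yields
\begin{equation*}
  z_n\sim a\,\frac{\sqrt\pi\,n^{3/2}\,n^{m/2-1}}{\gG(m/2)}=a\,\frac{\gG(1/2)}{\gG(m/2)}\,n^{(m+1)/2}
  \qquad\asntoo,
\end{equation*}
since $\sqrt\pi=\gG(1/2)$, which is the claim.

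I expect the only genuine technical point to be checking the $\Delta$-domain hypotheses of the transfer theorem, namely analyticity off the slit $[1/4,\infty)$ and the uniform error bound; but both are immediate from the explicit algebraic form of $f$ as a rational function of $\sqrt{1-4x}$ recorded in \refR{Rrational} (the potential pole of $\frac{1-\gd}{1+\gd}$ at $\gd=-1$ never occurs on the principal branch, where $\Re\gd>0$). With that in hand the proof is essentially routine bookkeeping of the singular expansion.
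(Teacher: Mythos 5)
Your proposal is correct and follows essentially the same route as the paper: both pass through \eqref{sw3}, identify the singular expansion $a(1-4x)^{-m/2}$ at $x=1/4$ (using \refR{Rrational} for analyticity off the slit), apply the transfer theorem \cite[Corollary VI.1]{FS}, and divide by the Catalan asymptotics $C_n\sim 4^n/\sqrt{\pi n^3}$. The only difference is that you spell out the simplification of the prefactor to $(1-\gd)/(1+\gd)$ and the $\Delta$-domain bookkeeping, which the paper leaves implicit.
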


\begin{proof}
By \refR{Rrational}, the
 generating function $\sumni C_nz_nx^n$ extends to an analytic function in
 $\bbC\setminus[1/4,\infty)$, and as $x\to1/4$, 
by assumption and \eqref{sw3}, %with $z_n\=\E Z(T_n)$ as above,
\begin{equation*}
\sumni c_{n}z_{n}x^{n}\sim
a\frac{1-\sqrt{1-4x}}{1+\sqrt{1-4x}} (1-4x)^{-m/2}
\sim a(1-4x)^{-m/2}.
\end{equation*}
This implies by standard singularity analysis
(see \cite[Corollary VI.1]{FS}), 
\begin{equation*}
 c_{n}z_{n} \sim a 4^n \frac{n^{m/2-1}}{\gG(m/2)}.
\end{equation*}
The result follows by  this and the standard asymptotic expression 
$C_n\sim 4^n/\sqrt{\pi n^3}$ for the Catalan numbers \cite[p.~38]{FS}.
\end{proof}

For later use, we show also the following, recalling $N(T):=|T|$.

\begin{lemma}
\label{L:Z5}
\begin{thmenumerate}
\item 
Let $f(\gd)=\ed Z$. Then
\begin{equation}\label{lz5}
  \ed(NZ) = -\frac12\bigpar{{\gd\qw}-\gd}f'(\gd)+\gd\qw f(\gd).
\end{equation}
\item 
In particular,
if\/ $\ed Z=a\gdx{m}+\ogdx{(m-1)}$, 
where $m\ge1$ and $a\in\bbR$,
then
$\ed (NZ)=\tfrac12ma\gdx{(m+2)}+\ogdx{(m+1)}$.
\end{thmenumerate}
\end{lemma}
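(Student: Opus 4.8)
The plan is to read both identities off the generating-function formula of \refL{LedZ}.

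For (i), first I would note that $NZ$, evaluated on the uniform tree $T_n\in\cB_n$, has expectation $nz_n$, because $N(T_n)=|T_n|=n$ is deterministic. Thus \refL{LedZ} applies verbatim with $z_n$ replaced by $nz_n$:
\begin{equation*}
\ed(NZ)=\frac{1+\gd}{1-\gd}\sumni nz_nC_n\parfrac{1-\gd^2}{4}^n.
\end{equation*}
Setting $w:=(1-\gd^2)/4$ and $g(w):=\sumni C_nz_nw^n$, the two quantities of interest become $f(\gd)=\tfrac{1+\gd}{1-\gd}g(w)$ and $\ed(NZ)=\tfrac{1+\gd}{1-\gd}\,wg'(w)$, since multiplying the $n$th coefficient by $n$ is exactly the Euler operator $w\dd/\dd w$ acting on $g$.

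The heart of (i) is then a change of variable. Because $\dd w/\dd\gd=-\gd/2$, the chain rule gives $\dd/\dd\gd=-\tfrac{\gd}{2}\dd/\dd w$, so differentiating $f=\tfrac{1+\gd}{1-\gd}g(w)$ with the product rule writes $f'(\gd)$ as a combination of $g(w)$ and $g'(w)$. I would solve that relation for $g'(w)$, eliminate $g(w)$ via $g(w)=\tfrac{1-\gd}{1+\gd}f(\gd)$, and substitute into $\ed(NZ)=\tfrac{1+\gd}{1-\gd}wg'(w)$; using $\tfrac{1+\gd}{1-\gd}w=(1+\gd)^2/4$ and $\tfrac{2}{(1-\gd)^2}g(w)=\tfrac{2}{1-\gd^2}f(\gd)$, the rational prefactors collapse to give exactly $\gd\qw f(\gd)-\tfrac12(\gd\qw-\gd)f'(\gd)$, which is \eqref{lz5}. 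The only delicate point — the main obstacle, such as it is — is that the prefactor $\tfrac{1+\gd}{1-\gd}$ carries its own $\gd$-dependence and so does not commute with the Euler operator; keeping the extra product-rule term is precisely what produces the $\gd\qw f(\gd)$ summand in \eqref{lz5}.

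For (ii) I would feed the hypothesis into \eqref{lz5}. Differentiating $\ed Z=a\gdx{m}+\ogdx{(m-1)}$ termwise gives $f'(\gd)=-ma\gdx{(m+1)}+\ogdx{m}$, since $\dd/\dd\gd$ turns $\gd^{-j}$ into a multiple of $\gd^{-j-1}$ and hence sends a polynomial in $\gd\qw$ of degree $\le m-1$ to one of degree $\le m$. Substituting, the summand $\gd\qw f(\gd)=a\gdx{(m+1)}+\ogdx{m}$ is absorbed into $\ogdx{(m+1)}$, while $-\tfrac12(\gd\qw-\gd)f'(\gd)$ has leading term $-\tfrac12\gd\qw\cdot(-ma\gdx{(m+1)})=\tfrac12ma\gdx{(m+2)}$, every other product having degree at most $m+1$ in $\gd\qw$. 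Collecting gives $\ed(NZ)=\tfrac12ma\gdx{(m+2)}+\ogdx{(m+1)}$, as claimed.
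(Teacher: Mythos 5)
Your proof is correct and is essentially the paper's own argument: the paper likewise differentiates the generating-function identity \eqref{sw2} with respect to $\gd$, recognizes the $n$-weighted sum as $\ed(NZ)$, and solves for it, which is exactly your Euler-operator computation in the variable $w=(1-\gd^2)/4$. Part (ii) is handled the same way (the paper simply calls it immediate); your bookkeeping of the degrees is accurate.
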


\begin{proof}
\pfitem{i}
Differentiate \eqref{sw2}. This gives, using \eqref{sw2} also for $NZ$,
  \begin{equation*}
  \begin{split}
\frac{\dd}{\dd\gd}\ed Z 
&= \frac1{1+\gd}\ed Z + \frac{1}{1-\gd} \ed Z
+\frac{1+\gd}{1-\gd} \sumni
z_n C_n \frac{-2\gd n}{1-\gd^2} \Bigparfrac{1-\gd^2}{4}^n
\\&
=\frac{2}{1-\gd^2}\ed Z -\frac{2\gd}{1-\gd^2}\ed(NZ).
  \end{split}
  \end{equation*}
The formula \eqref{lz5} follows. 

\pfitem{ii} An immediate consequence of \eqref{lz5}.
\end{proof}

As an example, taking $Z=1$ yields $f(\gd)=1$, and thus \eqref{lz5} yields
\begin{equation}
  \ed N = \gd\qw. \label{edn}
\end{equation}
Taking $Z=N$ in \eqref{lz5} now yields 
\begin{equation}\label{edn2}
  \ed N^2 = \frac12\gdx3 +\gdx2 -\frac12\gdx1,
\end{equation}
and we can continue and find explicit expressions for $\ed N^m$ for any
desired $m$.
(One can check that \refL{Lsing} is correct but trivial in these cases.)

After these preliminaries, we now consider the variables $X_\gs$, and begin
with their expectations for $\td$. 
Recall that $\gl(\gs)$ is defined by \eqref{gl}.  

\begin{lemma}\label{Led}
Let\/ $\gs\in\fS_k(132)$ with $k=|\gs|\ge1$ and
define $m$ and $\gD$ as in \refL{LR}.
Then\/ $\ed X_\gs$ is a polynomial in $\gdx1$ of
  degree $\gl(\gs)-1$ 
given by the recursion $\ed X_1=\gdx1$ and, for $k>1$,
\begin{multline*}%\label{led}
\ed X_{\gs}
=
\gd\qw\frac{(1-\gd)^2}4 
\sum_{q\in \gD}
\ed X_{\gsxx1q}\ed X_{\gsxx{q+1}k}
\\
+
\begin{cases}
\frac12(\gd\qw-1) \ed  X_{\gsxx{2}k},
& m=1, \\
\frac14 \gd\qw(1-\gd)^2 \ed  X_{\gsxx1{m-1}}\ed X_{\gsxx{m+1}k},
& 1<m<k, \\
\frac12(\gd\qw-1) \ed  X_{\gsxx1{k-1}},
& m=k.
\end{cases}
 \end{multline*}
The polynomial $\ed X_\gs$ has  
leading term $\eegs\gdxx{\gl(\gs)-1}$ 
and vanishing constant term,
where $\eegs>0$ satisfies the recursion $\ee_1=1$
and,
for $k>1$,
\begin{equation}\label{eegs}
\ee_{\gs}
=
\frac14\sum_{q\in \gD}
 \ee_{\gsxx1q}\ee_{\gsxx{q+1}k}
+ \begin{cases}
 \frac12\ee_{\gsxx1{k-1}}, & m=k,
\\
0, & m<k.  
 \end{cases}
% \end{multline} 
\end{equation}

\end{lemma}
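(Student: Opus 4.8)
The plan is to take expectations in the tree recursion \eqref{lrt} and exploit the independence of the two subtrees under $\td$. Conditioning on whether $T=\td$ has a nonempty left and right subtree, each nonempty subtree is an independent copy of $\td$, so factoring out a product $X_{\gsxx1q}X_{\gsxx{q+1}k}$ over the pair $(T_L,T_R)$ produces $\ed X_{\gsxx1q}\,\ed X_{\gsxx{q+1}k}$ weighted by the probability $p^2=(1-\gd)^2/4$ that both subtrees are present. A single-subtree term such as $X_{\gsxx2k,L}$ in the case $m=1$ survives only when $T_L$ is present (probability $p=(1-\gd)/2$) and then contributes $\ed X_{\gsxx2k}$; but one must also account for the terms $\ed X_{\gs,L}+\ed X_{\gs,R}$ on the right of \eqref{lrt}. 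Here each of $\ed X_{\gs,L}$ and $\ed X_{\gs,R}$ equals $p\,\ed X_\gs=\tfrac{1-\gd}2\ed X_\gs$, so the two together give $(1-\gd)\ed X_\gs$; moving this to the left-hand side turns the coefficient of $\ed X_\gs$ from $1$ into $1-(1-\gd)=\gd$, and dividing by $\gd$ is exactly what produces the factor $\gd\qw$ appearing throughout the stated recursion. First I would record these three observations — subtree independence, the probabilities $p$ and $p^2$, and the $1/\gd$ normalization from collecting the $\ed X_\gs$ terms — and then simply read off the recursion for $\ed X_\gs$, checking the three cases $m=1$, $1<m<k$, $m=k$ against \eqref{lrt}; the base case $\ed X_1=\ed N=\gd\qw$ is \eqref{edn}.

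Next I would establish, by induction on $k=|\gs|$, that $\ed X_\gs$ is a polynomial in $\gd\qw$ of degree exactly $\gl(\gs)-1$ with vanishing constant term. For the degree, note that $\gd\qw(1-\gd)^2/4=\tfrac14(\gd\qw-2+\gd)$ has degree $1$ in $\gd\qw$, and $\tfrac12(\gd\qw-1)$ also has degree $1$; so in the product case the degree of a term is $1+\deg\ed X_{\gsxx1q}+\deg\ed X_{\gsxx{q+1}k}$, while in the single-subtree cases it is $1+\deg\ed X_{\gsxx2k}$ and similarly. Using the inductive hypothesis $\deg\ed X_{\gsxx ab}=\gl(\gsxx ab)-1$ on the subpatterns, the claimed degree $\gl(\gs)-1$ should fall out once one verifies the additivity of $\gl$ across the split at the maximum, namely that $\gl(\gsxx1q)+\gl(\gsxx{q+1}k)$ and $\gl(\gsxx1{m-1})+\gl(\gsxx{m+1}k)$ relate to $\gl(\gs)$ correctly in each case. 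The vanishing constant term is immediate by induction because every term on the right carries a factor of $\gd\qw$ (or $\gd\qw-1$ or $\gd\qw-2+\gd$, each of which vanishes as $\gd\to\infty$ is not the point — rather each factor, being degree-$1$ in $\gd\qw$ with no constant piece in the $\gd\qw(1-\gd)^2/4$ case, multiplies a polynomial with vanishing constant term), so the product has no constant term; I would state this carefully case by case.

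Finally, to extract the leading coefficient $\eegs$, I would keep only the top-degree monomial $\gdxx{\gl(\gs)-1}$ in each polynomial. Since $\gd\qw(1-\gd)^2/4$ has leading coefficient $\tfrac14$ and $\tfrac12(\gd\qw-1)$ has leading coefficient $\tfrac12$, comparing leading coefficients on both sides of the recursion gives exactly \eqref{eegs}: the product terms contribute $\tfrac14\ee_{\gsxx1q}\ee_{\gsxx{q+1}k}$, the case $m=k$ contributes $\tfrac12\ee_{\gsxx1{k-1}}$, and the case $m<k$ contributes nothing to the top degree (because when $1<m<k$ the single-subtree product has strictly smaller degree than the dominant $\gD$-sum, and when $m=1$ it likewise does not reach the top degree). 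Positivity $\eegs>0$ follows by induction since the recursion has nonnegative coefficients and a strictly positive contribution (either from a nonempty $\gD$, which is guaranteed whenever $m<k$, or from the $\tfrac12\ee_{\gsxx1{k-1}}$ term when $m=k$). The main obstacle I anticipate is the bookkeeping in the degree computation: one must show that the maximal degree $\gl(\gs)-1$ is genuinely attained and is the same across all the cases, which amounts to the combinatorial identity relating $\gl(\gs)$ to the $\gl$-values of the pieces obtained by splitting $\gs$ at its maximum $\gs_m=k$ and over the descent positions in $\gD$; verifying that the $\gD$-sum terms dominate the single-subtree term when $1<m<k$, rather than tie or exceed it, is the delicate point that makes the second branch of \eqref{eegs} vanish.
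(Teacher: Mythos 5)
Your proposal follows essentially the same route as the paper: take expectations in \eqref{lrt}, use the independence of $T_L$ and $T_R$ and the presence probability $p=(1-\gd)/2$, move the resulting $(1-\gd)\ed X_\gs$ term to the left-hand side to produce the overall factor $\gd^{-1}$, and then track degrees and leading coefficients by induction, with the descent bookkeeping showing that exactly the $\gD$-sum and (when $m=k$) the single-subtree term attain degree $\gl(\gs)-1$. The one point worth tightening is your garbled parenthetical about constant terms: the inductive hypothesis that the subpattern expectations are polynomials in $\gd^{-1}$ with \emph{vanishing constant term} is what guarantees that multiplying their product by $(1-\gd)^2$ (which contains positive powers of $\gd$) still yields a polynomial in $\gd^{-1}$ at all, after which the outer factor $\gd^{-1}$ kills the constant term.
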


\begin{proof}
  We use induction on $\gl(\gs)$. 
%If $|\gs|=1$, so $X_\gs=X_1=N$, the result   is trivial by \eqref{en}.
%If $|\gs|>1$, 
We use the recursion in \refL{LRT} and take expectations, considering the
terms on the \rhs{} of \eqref{lrt} separately.

Since $T_L$ is a copy of $T=\td$ with probability $p=(1-\gd)/2$ and empty
with probability $1-p=(1-\gd)/2$, 
and the same holds for $T_R$,
we have
\begin{equation}
  \ed X_{\gs,L}=\ed X_{\gs,R}=p\ed X_\gs = \frac{1-\gd}2\ed X_\gs.
\end{equation}

Furthermore, $T_L$ and $T_R$ are independent, and thus, for $q\in\gD$,
\begin{equation}\label{visby}
  \begin{split}
\ed\bigpar{X_{\gsxx1q,L}X_{\gsxx{q+1}k,R}}	
&=
\ed\bigpar{X_{\gsxx1q,L}}\ed\bigpar{X_{\gsxx{q+1}k,R}}	
\\&
= \Bigparfrac{1-\gd}{2}^2
\ed\bigpar{X_{\gsxx1q}}\ed\bigpar{X_{\gsxx{q+1}k}}.
  \end{split}
\end{equation}
By the induction hypothesis, this is a polynomial in $\gdx1$ of degree 
\begin{equation}
  \begin{split}
&  \gl(\gsxx1q)-1+\gl(\gsxx{q+1}k)-1
\\&\qquad
=q+D(\gsxx1q)-1+k-q+D(\gsxx{q+1}k)-1
\\&\qquad
=k+D(\gsxx1k)-2
=\gl(\gs)-2,	
  \end{split}
\end{equation}
recalling that $q\in\gD$ implies that $q$ is a descent in $\gs$, which implies
$D(\gsxx1q)+D(\gsxx{q+1}k)=D(\gsxx1k)$ by our definition of $D$.
(Note that the induction assumption that the expectations are polynomials 
with vanishing constant term is used to
guarantee that the right hand side of \eqref{visby} is a polynomial in
$\gdx1$, even though it
contains the factor $(1-\gd)^2$; the same applies below.)

For the final term in \eqref{lrt}, we consider four different cases.
First, if $1<m<k$, then  as in \eqref{visby}
\begin{equation}%\label{visby}
  \begin{split}
\ed\bigpar{X_{\gsxx1{m-1},L}X_{\gsxx{m+1}k,R}}	
&=
%\ed\bigpar{X_{\gsxx1r,L}}\ed\bigpar{X_{\gsxx{r+1}k,R}}	
%\\&= 
\Bigparfrac{1-\gd}{2}^2
\ed\bigpar{X_{\gsxx1{m-1}}}\ed\bigpar{X_{\gsxx{m+1}k}},
  \end{split}
\end{equation}
and this is a polynomial in $\gdx1$ of degree
\begin{equation}
  \begin{split}
&  \gl(\gsxx1{m-1})-1+\gl(\gsxx{m+1}k)-1
\\&\qquad
=m-1+D(\gsxx1{m-1})-1+k-m+D(\gsxx{m+1}k)-1
\\&\qquad
=\gl(\gs)-3.	
  \end{split}
\end{equation}

If $m=1<k$, then the final term of \eqref{lrt}
is simply $X_{\gsxx2k,R}$, with an expectation that by induction is
a polynomial in $\gdx1$ of degree
\begin{equation}
  \begin{split}
& \gl(\gsxx{2}k)-1
%\\&\qquad
=k-1+D(\gsxx2{k})-1
%\\&\qquad
=\gl(\gs)-3,
  \end{split}
\end{equation}
since 1 is a descent.

If $m=k>1$, then the final term of \eqref{lrt}
is similarly $X_{\gsxx1{k-1},L}$, with an expectation that by induction is
a polynomial in $\gdx1$ of degree
\begin{equation}
  \begin{split}
& \gl(\gsxx{1}{k-1})-1
%\\&\qquad
=k-1+D(\gsxx1{k-1})-1
%\\&\qquad
=\gl(\gs)-2,
  \end{split}
\end{equation}
since $k-1$ is not a descent in $\gs$.

Finally, if $m=k=1$, \ie, if $\gs=1$, the final term is simply 1, again a
polynomial of degree $\gl(\gs)-2$.

Collecting the terms above, we thus obtain from \eqref{lrt}
\begin{equation}
  \label{ej3}
\ed X_\gs =  2p \ed X_\gs + f(\gd)
= (1-\gd) \ed X_\gs + f(\gd),
\end{equation}
where $f(\gd)$ is shorthand for a polynomial in $\gdx1$ of degree (at most)
$\gl(\gs)-2$,
which yields
\begin{equation}
  \label{ej4}
\ed X_\gs=\gd\qw f(\gd),
\end{equation}
a polynomial in $\gdx1$ of degree (at most)
$\gl(\gs)-1$ and without constant term.
Writing $f(\gd)$ explicitly, this yields the recursion stated in the lemma.
For $\gs=1$ we have $f(\gd)=1$ and \eqref{ej4} yields $\ed X_1=\gd\qw$, as
was found in 
another way in \eqref{edn}.

Moreover, an inspection of the leading terms above shows that the leading
coefficient of $f(\gd)$ is $\eegs$ given by \eqref{eegs} when $|\gs|>1$,
and $\ee_1=1$.
Thus, by induction, $\eegs>0$. (Recall that $\gD\neq\emptyset$ if $m<k$, so the
\rhs{} of \eqref{eegs} contains at least one non-zero term.)

This completes the induction step.
\end{proof}

It is now easy to show \eqref{tmaine}.

\begin{corollary}\label{Ced}
For every $\gs\in\fSxzzz$,
  \begin{equation}\label{tmaine=}
  \E \ns(\pinzzz) = \E X_\gs(T_n)
\sim A_\gs n^{\gl(\gs)/2},
\end{equation}
where
\begin{equation}\label{ags}
  A_\gs = \frac{\sqrt\pi}{\gG((\gl(\gs)-1)/2)} \eegs,
\end{equation}
with $\eegs$ given by the recursion \eqref{eegs}.
\end{corollary}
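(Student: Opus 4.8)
The plan is to obtain the corollary as an immediate consequence of the two preparatory lemmas already at hand, with no new analytic input. \refL{Led} pins down the $\gd\downto0$ behaviour of $\ed X_\gs$, while \refL{Lsing} is precisely the device that converts such behaviour into $\ntoo$ asymptotics for $\E X_\gs(T_n)$ through singularity analysis of the generating function \eqref{sw3}. So the entire task reduces to matching the conclusion of \refL{Led} to the hypothesis of \refL{Lsing} and then simplifying the resulting constant.

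First I would quote \refL{Led}, which asserts that $\ed X_\gs$ is a polynomial in $\gdx1$ of degree $\gl(\gs)-1$, with strictly positive leading coefficient $\eegs$ and vanishing constant term; in the language of \refL{Lsing} this is exactly
\begin{equation*}
\ed X_\gs=\eegs\,\gdxx{\gl(\gs)-1}+\ogdxx{\gl(\gs)-2}.
\end{equation*}
I would then apply \refL{Lsing} with $Z=X_\gs$, $m=\gl(\gs)-1$ and $a=\eegs$. The two hypotheses are readily verified: $a=\eegs>0$ (in particular $a\neq0$) by \refL{Led}, and $m=\gl(\gs)-1\ge1$ because $\gl(\gs)\ge|\gs|+1\ge2$ for every nonempty $\gs$.

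\refL{Lsing} then gives, since $(m+1)/2=\gl(\gs)/2$,
\begin{equation*}
\E X_\gs(T_n)\sim \eegs\,\frac{\gG(1/2)}{\gG((\gl(\gs)-1)/2)}\,n^{\gl(\gs)/2}.
\end{equation*}
Substituting $\gG(1/2)=\sqrt\pi$ identifies the constant as $A_\gs=\sqrt\pi\,\eegs/\gG((\gl(\gs)-1)/2)$, which is \eqref{ags}, and positivity of $A_\gs$ is inherited directly from $\eegs>0$. Finally, the identification $\E X_\gs(T_n)=\E\ns(\pinzzz)$ recorded in \eqref{X=n} turns this into \eqref{tmaine=}, completing the argument. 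I expect essentially no obstacle at this stage: all the genuine work was done in establishing \refL{Led} (the recursive computation of the leading coefficient $\eegs$) and \refL{Lsing} (the singularity-analysis transfer), and what remains is purely a bookkeeping combination of the two together with the evaluation $\gG(1/2)=\sqrt\pi$.
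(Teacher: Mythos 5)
Your proposal is correct and is exactly the paper's argument: the paper's proof is the one-line ``Immediate from Lemmas \ref{Led} and \ref{Lsing}, together with \eqref{X=n}'', and you have simply spelled out that combination (with $m=\gl(\gs)-1$, $a=\eegs$, and $\gG(1/2)=\sqrt\pi$). No differences worth noting.
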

\begin{proof}
  Immediate from Lemmas \ref{Led} and \ref{Lsing}, together with \eqref{X=n}.
\end{proof}

\begin{example}
\label{Eed3}
For $|\gs|=1$, we have $\ed N=\ed X_1=\gdx1$, as stated in \eqref{edn}.

For $|\gs|=2$, we have two cases. For $X_{12}=Y$ we obtain, \cf{}
\eqref{ry},
\begin{equation}
%\ed Y 
  \ed X_{12} = \txfrac12(\gdx1-1)\ed X_1 
= \txfrac12\gd\qww-\txfrac12\gd\qw, \label{edy}
\end{equation}
Similarly, by \refL{Led} (with $\gD=\set1$) and a short calculation,
or by \eqref{x21}, \eqref{edn}--\eqref{edn2} and \eqref{edy},
\begin{equation}
%\ed Y 
  \ed X_{21} 
= \txfrac14\gdx3-\txfrac14\gd\qw, \label{ed21}
\end{equation}

For $|\gs|=3$, we obtain from \refL{Led}, or similarly from the explicit
recursions in \refE{Erec3}, by simple calculations,
\begin{align}
%\ed N &= \gd\qw, \label{edn}
%\\
%\ed Y &= \txfrac12\gd\qww-\txfrac12\gd\qw, \label{edy}
%\\
\ed X_{123} &= \txfrac14\gdx3 - \txfrac12\gdx2 + \txfrac14\gdx1 \label{ed123},
\\
\ed X_{213} &= \txfrac18\gdx4-\txfrac18\gdx3 - \txfrac18\gdx2 + \txfrac18\gdx1,
\label{ed213}
\\
\ed X_{231} &= \txfrac18\gdx4-\txfrac18\gdx3 - \txfrac18\gdx2 + \txfrac18\gdx1,
\label{ed231}
\\
\ed X_{312} &= \txfrac18\gdx4-\txfrac18\gdx3 - \txfrac18\gdx2 + \txfrac18\gdx1,
\label{ed312}
\\
\ed X_{321} &= \txfrac18\gdx5-\txfrac18\gdx4 - \txfrac18\gdx3 + \txfrac18\gdx2.
\label{ed321}
\end{align}
Note that $\ed X_{213}=\ed X_{231}=\ed X_{312}$, which by \refL{LedZ} is
equivalent to the result by \citet{Bona-surprising}
$  \E n_{213}(\pinx{132})=  \E n_{231}(\pinx{132})=  
\E n_{312}(\pinx{132})$,
as mentioned earlier in \eqref{e213}.

The asymptotics \eqref{e12}--\eqref{e321} follow from  \refC{Ced} and
\eqref{eegs}. 
Alternatively, we can obtain these from the explicit
formulas \eqref{edy}--\eqref{ed321} and \refL{Lsing}.
\end{example}

\begin{remark}
  When $Z=X_\gs=\ns(\pinzzz)$, %\cf{} \eqref{X=n}, 
$z_n$ is the expected
  number of occurrences of $\gs$ in a random permutation in $\fSnzzz$,
  and $C_nz_n$ is thus the total number of occurrences of $\gs$ in all
  permutations in $\fSnzzz$. 
Generating functions for the latter numbers
have been given for the cases in \refE{Eed3} 
(although not explicitly for 321)
by \citet{Bona-abscence} and
  \cite{Bona-surprising}; 
by \refL{LedZ} and \refR{Rrational}, the formulas \eqref{edy}--\eqref{ed312} 
are equivalent to his results.
\end{remark}

\begin{remark}
  As said in \refS{S:intro}, $n_{231}(\pinzzz)$ and $n_{312}(\pinzzz)$ have
  the same distribution by symmetry, and thus $\ed X_{231}=\ed X_{312}$ is
  obvious. It is interesting that the proof above obtains these 
coinciding expectations by different routes, using the different recursions
\eqref{r231} and \eqref{r312}. The same applies to the higher moments
treated below:
$\ed X_{231}^k=\ed X_{312}^k$ for any $k$, but that is difficult to see from
our recursions.
\end{remark}

\begin{example}
  \label{Eed4}
For $|\gd|=4$, there are $C_4=14$ permutations $\gs\in\fS_4$.
\refL{Led} yields the following formulas.
\begin{align}\label{ed1234}
 \ed X_{1234} &= 
\txfrac18{\gd}^{-4}-\txfrac38{\gd}^{-3}+\txfrac38{\gd}^{-2}-\txfrac18\gd^{-1}
\\
\ed X_{2134} &
=\ed X_{2314} 
=\ed X_{2341} 
=\ed X_{3124} 
=\ed X_{3412} 
=\ed X_{4123} 
\notag\\&=
\txfrac{1}{16}{\gd}^{-5}-\txfrac18{\gd}^{-4}+\txfrac18{\gd}^{-2}
 -\txfrac{1}{16}\gd^{-1}
\\
\ed X_{3214} &
=\ed X_{3421}
=\ed X_{4231}
=\ed X_{4312}
\notag\\&
= 
\txfrac{1}{16}{\gd}^{-6}-\txfrac18{\gd}^{-5}+\txfrac18{\gd}^{-3}
 -\txfrac{1}{16}{\gd}^{-2}
\\
\ed X_{3241}&
=\ed X_{4213}
\notag\\&
=
\txfrac{1}{32}{\gd}^{-6}-\txfrac{1}{32}{\gd}^{-5}-\txfrac{1}{16}{\gd}^{-4}
+\txfrac{1}{16}{\gd}^{-3}+\txfrac{1}{32}{\gd}^{-2}-\txfrac{1}{32}\gd^{-1}
\\ \label{ed4321}
\ed X_{4321}
&={\txfrac {5}{64}}{\gd}^{-7}-{\txfrac {5}{32}}{\gd}^{-6}
-{\txfrac {1}{64}}{\gd}^{-5}+\txfrac{3}{16}{\gd}^{-4}
-{\txfrac {5}{64}}{\gd}^{-3}-\txfrac{1}{32}{\gd}^{-2}+{\txfrac {1}{64}}\gd^{-1}.
\end{align}
We see again several coincidences, which by \refL{LedZ} imply corresponding
equalities for each $n$, for a random 132-avoiding permutation $\pinzzz$:
\begin{align}
\E n_{2134} &
=\E n_{2314} 
=\E n_{2341} 
=\E n_{3124} 
=\E n_{3412} 
=\E n_{4123},
\\
\E n_{3214} &
=\E n_{3421}
=\E n_{4231}
=\E n_{4312},
\\
\E n_{3241}&
=\E n_{4213}.
\end{align}
Some equalities are obvious by the inversion symmetry in \refR{Rsymm},
others follow by \citet{Bona-surprising} and all are contained in the
result by \citet{Rudolph}.

Asymptotic results follow by 
\eqref{ed1234}--\eqref{ed4321} and \refL{Lsing}, or directly by \refC{Ced}
and \eqref{eegs}; we leave these to the reader.
It is also possible to obtain exact expressions for finite $n$
by \eqref{sw3} and Taylor
expansion; we leave these too to the reader.
\end{example}

\begin{example}
  \label{Eed1...k}
For $\gs=1\dotsm k$, $k\ge1$, we have $\gD=\emptyset$ and \refL{Led} yields by
induction in $k$
\begin{equation}\label{ed1...k}
  \ed X_{1\dotsm k} = 2^{1-k}\bigpar{\gdx1-1}^{k-1}\gdx1.
\end{equation}
This is by \refL{LedZ} and \refR{Rrational} equivalent to the generating
function given for this case by \citet{Bona-abscence}.

\refL{Lsing} and \eqref{ed1...k} yield \eqref{e1...k}.
\end{example}

\begin{example}
  \label{Eedk...1}  
For $\gs=k\dotsm 1$, $k\ge1$, we have the opposite extreme 
$\gD=[k-1]$. \refL{Led} yields the recursion, where we %for simplicity
write $f_k(\gd)=\ed X_{\gski}$,
\begin{equation}\label{edk...1}
  f_k(\gd) = \tfrac14\gdx1(1-\gd)^2\sum_{q=1}^{k-1}f_q(\gd)f_{k-q}(\gd)
+ \tfrac12(\gdx1-1)f_{k-1}(\gd),
\end{equation}
which by \eqref{sw3} is equivalent to the recursion 
given for the corresponding generating functions in \citet{Bona-abscence}.
%(20)

The leading term $B_{\gski}\gd^{-(2k-1)}$
is given by the recursion \eqref{eegs}, but it is simpler
to argue backwards and note that $A_{\gski}=1/k!$ by \refT{Tmain},
see also \refR{Rk1}, and thus  \eqref{ags} yields
\begin{equation}
  B_{\gski} = \frac{\gG(k-1/2)}{\gG(1/2) k!} 
=\frac{(2k-3)!!}{2^{k-1}k!}
=\frac{C_{k-1}}{2^{2k-2}}.
\end{equation}
See the examples in \eqref{ed21}, \eqref{ed321}, \eqref{ed4321}.
\end{example}

\section{Higher moments}\label{Shigher}

We can compute higher moments in the same way. 

\begin{lemma}\label{Lmom1}
For any permutations
$\gspp{1},\dots,\gspp{\nu}\in\fSx(132)$,
not necessarily distinct,
$\ed \bigpar{X_{\gspp1}\dotsm X_{\gspp{\nu}}}$ 
is a polynomial in $\gdx1$ of
degree $\sum_{j=1}^\nu\gl(\gspp{j})-1$,
with positive leading coefficient 
$\ee_{\gspp1,\dots,\gspp\nu}$ 
and vanishing constant term. 
\end{lemma}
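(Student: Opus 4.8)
The plan is to prove this by induction on the total weight $w:=\sum_{j=1}^{\nu}\gl(\gspp j)$, extending the single‑factor argument of \refL{Led}. Since every $\gspp j\in\fSx(132)$ is nonempty, $\gl(\gspp j)\ge2$, so the minimal case is $w=2$, forcing $\nu=1$ and $\gspp1=1$, where $\ed X_1=\gdx1$ settles everything; more generally the whole case $\nu=1$ is exactly \refL{Led}. For the inductive step I would substitute the recursion \eqref{lrt} of \refL{LRT} into each of the $\nu$ factors of $W:=X_{\gspp1}\dotsm X_{\gspp\nu}$ and expand. Each factor is replaced by a sum of terms, every term a product of one or two $X$‑variables carried by the left subtree $T_L$ or the right subtree $T_R$; multiplying out, $W$ becomes a sum, over all joint choices of the four recursion terms for the $\nu$ factors, of expressions $P_LP_R$, where $P_L$ is a product of variables $X_\tau(T_L)$ and $P_R$ a product of variables $X_\rho(T_R)$.

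Next I would apply $\ed$ termwise. Since $T_L$ and $T_R$ are independent, $\ed(P_LP_R)=\ed(P_L)\,\ed(P_R)$; and since $T_L$ is empty with probability $1-p$ (which kills every nonempty product) and otherwise a copy of $T=\td$, one has $\ed(P_L)=1$ when $P_L$ is empty and $\ed(P_L)=p\,\ed(U)$ otherwise, where $p=(1-\gd)/2$ and $U$ is the corresponding product of $X_\tau$'s on a generic tree; likewise for $P_R$ with a product $V$. The two ``diagonal'' terms, in which every factor chooses its $X_{\gspp j,L}$ term, respectively its $X_{\gspp j,R}$ term, each reproduce $p\,\ed W$, together contributing $(1-\gd)\ed W$. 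Writing $R$ for the sum of all remaining terms, this gives $\ed W=(1-\gd)\ed W+R$, hence $\ed W=\gd\qw R$, precisely as in \eqref{ej3}--\eqref{ej4}.

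It then remains to analyse $R$. In every non‑diagonal term the patterns $\gspp j$ are distributed between $T_L$ and $T_R$ (after possibly cutting one of them at a descent $q\in\gD_j$ or at its maximum $m_j$), and the contribution factors as $p^{a}\,\ed(U)\,\ed(V)$ with $a\in\set{1,2}$, where $U,V$ are products of $X$‑variables of strictly smaller total weight than $w$. The induction hypothesis applies: $\ed(U)$ and $\ed(V)$ are polynomials in $\gdx1$ with vanishing constant term. In the two‑sided case ($a=2$), $\ed(U)\ed(V)$ has no term below $\gdx2$, so $\gd\qw p^{2}\ed(U)\ed(V)=\gd\qw\tfrac14(1-\gd)^2\ed(U)\ed(V)$ is a genuine polynomial in $\gdx1$; in the one‑sided case ($a=1$, with $V$ empty), the single vanishing constant term already makes $\gd\qw p\,\ed(U)=\gd\qw\tfrac12(1-\gd)\ed(U)$ a polynomial in $\gdx1$. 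This is the same cancellation noted after \eqref{visby}, and it is where the vanishing‑constant‑term bookkeeping is essential. Consequently $\ed W=\gd\qw R$ is a polynomial in $\gdx1$, and carrying the overall factor $\gd\qw$ forces its constant term to vanish.

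For the degree and the leading coefficient I would track how the weight moves: cutting at a descent $q\in\gD_j$ or sending a factor whole to one side preserves weight, while the final ($m_j$‑)term lowers the weight of that factor by $1$ (when $m_j=k_j$ or $1<m_j<k_j$) or by $2$ (when $m_j=1$). Inspecting the prefactor $\gd\qw p^{a}$ shows that only two families reach the top degree $w-1$ in $\gdx1$: the weight‑preserving two‑sided terms, each contributing $\tfrac14$ times a product of leading coefficients, and the one‑sided terms in which a single factor with $m_j=k_j$ is cut at its maximum while all others remain on the same side, each contributing $\tfrac12$ times a leading coefficient; every other term has strictly smaller degree. This reproduces the two‑case structure of \eqref{eegs}. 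Because each such contribution is a positive multiple of leading coefficients that are positive by the induction hypothesis, no cancellation can occur, and one positive contribution always exists: for $\nu\ge2$ send $\gspp1$ to $T_L$ and the remaining factors to $T_R$, while $\nu=1$ is \refL{Led}. Hence the leading coefficient $\ee_{\gspp1,\dots,\gspp\nu}$ is strictly positive, closing the induction. The only real obstacle I anticipate is organizational: managing the expansion over all joint choices of the recursion terms and checking uniformly that each resulting term is a polynomial in $\gdx1$ of the predicted degree; positivity is then automatic since the top‑degree contributions all share the same sign.
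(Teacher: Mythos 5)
Your proposal is correct and follows essentially the same route as the paper: induction on the total weight $\sum_j\gl(\gspp j)$, substitution of the recursion \eqref{lrt} into each factor, isolation of the two diagonal all-$L$ and all-$R$ terms each equal to $p\ed W$, and then solving $\ed W=(1-\gd)\ed W+R$ with the remaining terms controlled by the induction hypothesis. The paper's proof is only a brief sketch referring back to \refL{Led}; your write-up supplies the degree and leading-coefficient bookkeeping that the paper leaves implicit, and does so correctly.
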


\begin{proof}
  We argue as in the proof of \refL{Led}, using induction on 
$\sum_{j=1}^\nu\gl(\gspp{j})$.
Replace each $X_{\gspp j}$ by the corresponding expression in
\eqref{lrt}, expand the product of these, and take the expectation.
Among the %(in general) 
many terms that this produces, the two special ones
$\ed \bigpar{X_{\gspp1,L}\dotsm X_{\gspp{\nu},L}}$ and
$\ed \bigpar{X_{\gspp1,R}\dotsm X_{\gspp{\nu},R}}$ 
are both equal to 
$p \ed \bigpar{X_{\gspp1}\dotsm X_{\gspp{\nu}}}$.
All other terms are by induction polynomials in $\gdx1$, of degrees at most  
$\sum_{j=1}^\nu\gl(\gspp{j})-2$ 
(by arguing similarly to the proof of \refL{Led} for each $\gspp j$);
moreover, 
there is at least one term of exactly this degree and
all polynomials have positive leading coefficients.
The result follows as in \refL{Led}.
\end{proof}

\begin{example}
By squaring
\eqref{ry} and taking the expectation we obtain
\begin{equation*}
  \begin{split}
\ed Y^2 
&= 	
\ed (Y_L+N_L)^2 + \ed Y_{R}^2 + 2 \ed (Y_{L}+N_L)\ed Y_R
\\
&= 
p\ed (Y+N)^2 +p \ed Y^2 + 2p^2 \ed (Y+N)\ed Y
\\&
=2p\ed Y^2+2p\ed(NY)+p\ed N^2 + 2p^2 (\ed Y)^2
+2p^2\ed Y\ed N.
  \end{split}
\end{equation*}
Hence, recalling $2p=1-\gd$,
\begin{equation*}
  \begin{split}
\ed Y^2 
&= 	
\gdx1\Bigpar{2p\ed(NY)+p\ed N^2 + 2p^2 (\ed Y)^2
+2p^2\ed Y\ed N},
  \end{split}
\end{equation*}
which can be written as an explicit polynomial in $\gdx1$ by 
\eqref{edn}--\eqref{edn2}, \eqref{edy} and
\refL{L:Z5}. Using this, we then can find, for example, $\E (X_{123}Y)$ 
by multiplying \eqref{ry} and \eqref{r123} and taking the expectation, and
then
$\E X_{123}^2$ by squaring \eqref{r123} and using the same argument
again. 
\end{example}

In this way we can recursively
obtain any mixed moment of the variables $X_\gs$
as a polynomial in $\gdx1$. 
For simplicity, we leave  exact formulas to the reader, and consider
only the leading terms, which 
by \refL{Lsing}
will yield the moment asymptotics for $T_n$
that we desire. 

A recursion for the leading coefficients $\ee_{\gspp1,\dots,\gspp\nu}$ 
is implicit in the proof above, but to write it explicitly in
general seems a bit messy, so we restrict ourselves in the rest of this
section to the case $|\gs|\le3$, which gives examples illustrating the
general behaviour.

We consider first a single $X_{\gs}$ with $|\gs|=3$,
but for the induction, we have to consider mixed moments of $X_\gs$ and
$Y=X_{12}$. 

\begin{lemma}\label{LedXY}\quad
  \begin{romenumerate}[-10pt]
  \item \label{ledxy123}
  If $k\ge0$ and $l\ge0$ with $k+l\ge1$, then
  \begin{equation}\label{xy123}
\ed\bigpar{X_{123}^kY^l}
=
a_{kl}\gdxx{4k+3l-1} + \ogdxx{4k+3l-2}
  \end{equation}
for some positive numbers $a_{kl}$ satisfying $a_{01}=\frac12$, $a_{10}=\frac14$
and the recursion relation
\begin{equation}\label{xy123r}
a_{k,l}
=\frac{k}2 a_{k-1,l+1} 
+\frac{l(4k+3l-4)}4 a_{k,l-1}
+
\frac{1}4\sumsum_{0<i+j<k+l} 
\binom{k}{i}\binom{l}{j}
 a_{i,j}a_{k-i,l-j}.
\end{equation}

\item \label{ledxy213}
  If $k\ge0$ and $l\ge0$ with $k+l\ge1$, then
  \begin{equation}\label{xy213}
\ed\bigpar{X_{213}^kY^l}
=
b_{kl}\gdxx{5k+3l-1} + \ogdxx{5k+3l-2}
  \end{equation}
for some positive numbers $b_{kl}$ satisfying $b_{01}=\frac12$, $b_{10}=\frac18$
and the recursion relation
\begin{multline}\label{xy213r}
b_{k,l}
=\frac{k(5k+3l-6)(5k+3l-4)}{16} b_{k-1,l} 
+\frac{l(5k+3l-4)}4 b_{k,l-1}
\\
+
\frac{1}4\sumsum_{0<i+j<k+l} 
\binom{k}{i}\binom{l}{j}
 b_{i,j}b_{k-i,l-j}.
\end{multline}

\item \label{ledxy231}
  If $k\ge0$ and $l\ge0$ with $k+l\ge1$, then
  \begin{equation}\label{xy231}
\ed\bigpar{X_{231}^kY^l}
= \ed\bigpar{X_{312}^kY^l}
=
c_{kl}\gdxx{5k+3l-1} + \ogdxx{5k+3l-2}
  \end{equation}
for some positive numbers $c_{kl}$ satisfying $c_{01}=\frac12$, $c_{10}=\frac18$
and the recursion relation
\begin{multline}\label{xy231r}
c_{k,l}
=\frac{l(5k+3l-4)}4 c_{k,l-1}
+
\frac{1}4\sumsumsum_{(i,j,m)\neq(0,0,0),(k,l,0)} 
\binom{k}{i,m,k-i-m}\binom{l}{j} \times\\
\frac{\gG\bigpar{(5i+3j-1)/2+m)}}{\gG\bigpar{(5i+3j-1)/2)}}
 c_{i,j}c_{k-i-m,l-j+m}.
\end{multline}
  \end{romenumerate}
\end{lemma}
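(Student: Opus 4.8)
The plan is to prove all three parts \emph{simultaneously} by induction on the total weight $5k+3l$ (resp.\ $4k+3l$ in part~\ref{ledxy123}), running the mechanism of \refL{Led} and \refL{Lmom1} but now tracking the \emph{leading coefficient} rather than merely the degree. Concretely, I would substitute \eqref{ry} together with \eqref{r123}, \eqref{r213} or \eqref{r231} into $X_\gs^kY^l$, expand the product, and take $\ed$. Since $T_L$ and $T_R$ are independent and each is empty with probability $1-p$ or a copy of $T=\td$ with probability $p$, a monomial that is a nonempty product of $L$-functionals times a nonempty product of $R$-functionals factors as $p^2$ times a product of two $\ed$-moments, while a monomial living on one side alone contributes a single factor $p$. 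The two ``pure'' monomials (every factor taking its own $L$-continuation, resp.\ its own $R$-continuation) together give $2p\,\ed(X_\gs^kY^l)=(1-\gd)\ed(X_\gs^kY^l)$; moving these to the left-hand side produces the overall factor $\gdx1$, exactly as in \eqref{ej4}. Degree and positivity are already supplied by \refL{Lmom1}, so only the leading coefficient remains to be computed, and for that I drop every subleading monomial.

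For part~\ref{ledxy123} the surviving contributions are of exactly three types, the key observation being that a monomial using an ``extra'' term ($+Y_L$ from \eqref{r123} or $+N_L$ from \eqref{ry}) while both subtrees stay nonempty, or using two or more extras, is subleading. First, the monomials in which each factor keeps its own $L$- or $R$-continuation with both sides nonempty give the ``split'' double sum: the factor $p^2$ contributes its leading value $\tfrac14$ and the two halves contribute $a_{i,j}$ and $a_{k-i,l-j}$, with multinomial weight $\binom ki\binom lj$, summed over $0<i+j<k+l$. Second, a single $+Y_L$ chosen in one of the $k$ copies of $X_{123}$ (all else taking its $L$-continuation, $T_R$ empty) contributes $\tfrac k2a_{k-1,l+1}$. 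Third, a single $+N_L$ chosen in one of the $l$ copies of $Y$ yields $\ed\bigpar{N\cdot X_{123}^kY^{l-1}}$, whose leading coefficient is supplied by part~(ii) of \refL{L:Z5} and gives $\tfrac{l(4k+3l-4)}4a_{k,l-1}$. Reading the base values $a_{01}=\tfrac12$, $a_{10}=\tfrac14$ off \eqref{edy} and \eqref{ed123} then gives \eqref{xy123r}.

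Part~\ref{ledxy213} runs identically, except that the extra term in \eqref{r213} is $\binom{N_L}2-Y_L$, which by \eqref{x21} equals $X_{21,L}$. Its leading behaviour is governed by $\tfrac12N_L^2$ (the $N_L$ and $Y_L$ pieces being of lower degree), so the single-extra boundary contribution is $\tfrac12\ed\bigpar{N^2X_{213}^{k-1}Y^l}$ up to the multinomial-and-$p$ factor; applying part~(ii) of \refL{L:Z5} twice then yields the first term of \eqref{xy213r}. The $+N_L$ and split contributions are as before, and $b_{01}=\tfrac12$, $b_{10}=\tfrac18$ come from \eqref{edy} and \eqref{ed213}.

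Part~\ref{ledxy231} is the main obstacle, and I would treat it last, since \eqref{r231} has two extras, $Y_LN_R$ and $N_LN_R$, each splitting a functional across both subtrees. The crucial bookkeeping facts are: (a) a $Y$ contributes weight $\gl(12)=3$ on either side while $N$ contributes only $\gl(1)=2$, so the doubly-split term $N_LN_R$ costs one degree per use and is always subleading, whereas placing a $Y$ on the right via $Y_R$ is \emph{not} subleading, because at leading order $T_R$ already carries the right-continuations and the right $N$'s; (b) the $N$'s deposited on $T_R$ by the $Y_LN_R$ terms combine with those continuations into a moment $\ed\bigpar{N^mX_{231}^iY^j}$, and iterating part~(ii) of \refL{L:Z5} shows that multiplication by $N^m$ multiplies the leading coefficient by $\gG\bigpar{(5i+3j-1)/2+m}/\gG\bigpar{(5i+3j-1)/2}$ (valid since the base degree $5i+3j-1\ge1$), which is the source of the Gamma ratio. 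Because the $l$ copies of $Y$ may then split freely between the two subtrees, one gets the binomial $\binom lj$ and the general index $j$; identifying $(i,j,m)$ with (number of right $X_{231}$-continuations, number of $Y_R$'s, number of $Y_LN_R$'s) matches the triple sum of \eqref{xy231r} term by term, the excluded pairs $(0,0,0)$ and $(k,l,0)$ being precisely the two pure monomials, and the lone $+N_L$ boundary term again giving $\tfrac{l(5k+3l-4)}4c_{k,l-1}$ (base values from \eqref{edy} and \eqref{ed231}). Finally, the identity $\ed(X_{231}^kY^l)=\ed(X_{312}^kY^l)$ is cleanest to obtain not from \eqref{r312} but from the inversion symmetry $\pi\mapsto\pi\qw$ of \refR{Rsymm}, which fixes $n_{12}$ and interchanges $n_{231}$ with $n_{312}$, hence identifies the two families of moments and their leading coefficients.
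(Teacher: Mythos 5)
Your proposal is correct and follows essentially the same route as the paper: expand via \eqref{ry} and \eqref{r123}/\eqref{r213}/\eqref{r231}, move the two pure $L$- and $R$-monomials (total $2p\,\ed(\cdot)$) to the left to produce the factor $\gdx1$, keep only full-budget $L\times R$ split terms (with $p^2\to\tfrac14$) and the single-deficit one-sided boundary terms, and use \refL{L:Z5}(ii) iteratively for the $N$-powers, which is exactly the source of the factors $\tfrac{l(4k+3l-4)}4$, $\tfrac{(5k+3l-6)(5k+3l-4)}{16}$ and the Gamma ratio in \eqref{xy231r}; the equality with $X_{312}$ via inversion symmetry is also the paper's argument. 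The only (harmless) imprecision is the parenthetical ``$T_R$ empty'' — the one-sided monomials simply contain no $R$-factors and pick up a single factor $p$ because $T_L$ is a copy of $T$ with probability $p$; nothing is conditioned on $T_R$.
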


\begin{proof}
\pfitemref{ledxy123}
%Let $\pol$ denote the set of all polynomials in $\gdx1$ without constant term.
Note that \eqref{edy} and \eqref{ed123} show that \eqref{xy123} holds when
$k+l=1$, with 
$a_{01}=\frac12$,
$a_{10}=\frac14$. 
We continue by induction, and assume that $K,L\ge0$ with
$K+L\ge2$ 
are such that \eqref{xy123} holds 
when $1\le 4k+3l<4K+3L$.
For such $k$ and $l$ and any $m\ge0$, 
\refL{L:Z5}(ii) implies
\begin{equation}\label{z10}
\begin{split}
\ed(X_{123}^kY^lN^m)
&=a_{kl}\prod_{j=0}^{m-1}\frac{4k+3l+2j-1}2\cdot\gdx{(4k+3l+2m-1)}
\\&\hskip12em
+\ogdx{(4k+3l+2m-2)}\\
&=\ogdx{(4k+3l+2m-1)}.
\end{split}
\raisetag{\baselineskip}
\end{equation}
The same holds for $k=l=0$ and $m\ge1$ too (with $a_{00}=-2$) by
\eqref{edn} and Lemma \ref{L:Z5}.

Now consider $k=K$ and $l=L$. 
By \eqref{r123}, \eqref{ry} and the binomial theorem,
\begin{multline}\label{kl123}
\ed\bigpar{X_{123}^kY^l}
=\sum_{k_1+k_2+k_3=k}\sum_{l_1+l_2+l_3=l} 
\binom{k}{k_1,k_2,k_3}\binom{l}{l_1,l_2,l_3}
\times\\	
\ed\bigpar{X_{123,L}^{k_1}X_{123,R}^{k_2}Y_L^{k_3+l_1}Y_R^{l_2}N_L^{l_3}}.
\end{multline}
Consider one of the terms in the sum.
If this term contains  both $L$-factors and $R$-factors,
\ie, if $k_1+k_3+l_1+l_3>0$ and $k_2+l_2>0$,
then the expectation is, by the induction hypothesis and \eqref{z10},
\begin{equation}
\begin{split}
p^2  \ed\bigpar{X_{123}^{k_1}Y^{k_3+l_1}N^{l_3}}
\ed\bigpar{X_{123}^{k_2}Y^{l_2}}
= \ogdxx{4k_1+3k_3+3l_1+2l_3+4k_2+3l_2-2}
\end{split}  
\end{equation}
If $k_3>0$ or $l_3>0$, this term is of lower order than $\gdxx{4k+3l-2}$,
and we see, using the induction hypothesis again, that the sum of the terms in
\eqref{kl123} 
with both $L$-factors and $R$-factors is
\begin{equation*}
  \begin{split}
\sumsum_{0<k_1+l_1<k+l} 
\binom{k}{k_1}\binom{l}{l_1}
p^2 a_{k_1,l_1}a_{k-k_1,l-l_1}	
\gdxx{4k+3l-2}
+ \ogdxx{4k+3l-3}.
  \end{split}
\end{equation*}
The terms in \eqref{kl123} with only $L$-factors are the ones with
$k_2=l_2=0$.
The induction hypothesis and \eqref{z10}
now show that the term is of order
$\ogdx{4k_1+4k_3+3l_1+2l_3-1}$, and thus only terms with $k_3+l_3\le1$ are
significant. The sum of these terms is thus, using \eqref{z10},
{\multlinegap=0pt
\begin{multline*}
p \ed (X_{123}^kY^l)	
+pk\ed (X_{123}^{k-1}Y^{l+1})	
+pl\ed (X_{123}^{k}Y^{l-1}N)
+\ogdxx{4k+3l-3}	
\\
\shoveleft{\quad=
p \ed (X_{123}^kY^l)	
+pk a_{k-1,l+1}  \gdxx{4k+3l-2}		
}\\
+pla_{k,l-1}\frac{4k+3l-4}2\gdxx{4k+3l-2}	
+\ogdxx{4k+3l-3}.
\end{multline*}}%
Finally, the only term in \eqref{kl123} with only $R$-factors is 
\begin{equation*}
 \ed (X_{123,R}^kY_R^l)	
=
p \ed (X_{123}^kY^l).	
\end{equation*}
Using $p=(1-\gd)/2$, we thus obtain by collecting the terms in \eqref{kl123},
{\multlinegap=0pt
\begin{multline*}
\gd \ed (X_{123}^kY^l)	
=\frac12 k a_{k-1,l+1}  \gdxx{4k+3l-2}		
+\frac12la_{k,l-1}\frac{4k+3l-4}2\gdxx{4k+3l-2}	
\\+
\sumsum_{0<k_1+l_1<k+l} 
\binom{k}{k_1}\binom{l}{l_1}
\frac14 a_{k_1,l_1}a_{k-k_1,l-l_1}	
\gdxx{4k+3l-2}
+ \ogdxx{4k+3l-3},
 \end{multline*}}%
which completes the induction.

\pfitemref{ledxy213}
Similar, with $4k$ replaced by $5k$ and using \eqref{r213}; the main
difference is that the 
significant terms with only $L$-factors now are
$ \ed (X_{123,L}^kY_L^l)$, 	
$k\ed (X_{123,L}^{k-1}Y_L^{l}\binom{N_L}2)$ and
$l\ed (X_{123,L}^{k}Y_L^{l-1}N_L)$, where the first and third terms are as
above and the second is handled by the analogue of \eqref{z10}. 

\pfitemref{ledxy231}
The equality $\ed\bigpar{X_{231}^kY^l}= \ed\bigpar{X_{312}^kY^l}$ follows
from the inversion symmetry in \refR{Rsymm}, which 
implies that $(X_{231},Y)\eqd(X_{312},Y)$
by translating first to $T_n$ by \eqref{X=n}
and then to $\td$ by taking a random $n$.
For the recursion we can use any of \eqref{r231} and \eqref{r312}; the
leading terms will be the same.
The main difference in the induction is that (using \eqref{r231})
the significant terms with both $L$-factors and $R$-factors now 
are all terms
\begin{multline*}%\label{kl123}
\binom{k}{k_1,k_2,k_3}\binom{l}{l_1}
\ed\bigpar{X_{231,L}^{k_1}X_{231,R}^{k_2}N_R^{k_3}Y_L^{k_3+l_1}Y_R^{l_2}}
\\
=
\binom{k}{k_1,k_2,k_3}\binom{l}{l_1}
p^2\ed\bigpar{X_{231}^{k_1}Y^{k_3+l_1}}\ed\bigpar{X_{231}^{k_2}Y^{l_2}N^{k_3}},
\end{multline*}
except the terms with $k_1+k_3+l_1=0$ or $k_2+k_3+l_2=0$, which, using the
analogue of \eqref{z10}, leads to the recursion \eqref{xy231r}.
(We write $i=k_2$, $j=l_2$, $m=k_3$.)
\end{proof}

\begin{remark}\label{R00}
  The proof (or a direct inspection) shows that the recursions
  \eqref{xy123r}, \eqref{xy213r}, \eqref{xy231r} hold also for $k+l=1$,
  provided we define $a_{0,0}=b_{0,0}=c_{0,0}:=-2$.
\end{remark}

This yields the moment asymptotics. 

\begin{theorem}\label{Tmom3}
The following hold as \ntoo, for any integers
$k\ge0$ and $l\ge0$.
  \begin{romenumerate}[-10pt]
  \item \label{ledmxy123}
  \begin{equation}\label{mxy123}
\nxx{4k+3l} \E\bigpar{X_{123}(T_n)^kY(T_n)^l}
\to\frac{k!\,l!\,\sqrt\pi}{2^{4k+3l-2}\,\gxx{4k+3l-1}} 
\ga_{kl}
  \end{equation}
for some numbers $\ga_{kl}$ satisfying $\ga_{0,0}=-1/2$,
$\ga_{10}=\ga_{01}=1$
and the recursion relation
\begin{equation}\label{mxy123r}
\ga_{k,l}
=\xpar{l+1} \ga_{k-1,l+1} 
+2\xpar{4k+3l-4} \ga_{k,l-1}
+
\sumsum_{0<i+j<k+l} 
 \ga_{i,j}\ga_{k-i,l-j}.
\end{equation}

\item \label{ledmxy213}
  \begin{equation}\label{mxy213}
\nxx{5k+3l}\E\bigpar{X_{213}(T_n)^kY(T_n)^l}
\to\frac{k!\,l!\,\sqrt\pi}{2^{5k+3l-2}\,\gxx{5k+3l-1}} 
\gb_{kl}
  \end{equation}
for some numbers $\gb_{kl}$ satisfying 
$\gb_{0,0}=-1/2$, $\gb_{10}=\gb_{01}=1$
and the recursion relation
\begin{multline}\label{mxy213r}
\gb_{k,l}
=2{(5k+3l-6)(5k+3l-4)} \gb_{k-1,l} 
+2\xpar{5k+3l-4} \gb_{k,l-1}
\\
+
\sumsum_{0<i+j<k+l} 
 \gb_{i,j}\gb_{k-i,l-j}.
\end{multline}

\item \label{ledmxy231}
    \begin{equation}\label{mxy231}
	  \begin{split}
\nxx{5k+3l}\E\bigpar{X_{231}(T_n)^kY(T_n)^l}
&=
\nxx{5k+3l}\E\bigpar{X_{312}(T_n)^kY(T_n)^l}
\\&
\to\frac{k!\,l!\,\sqrt\pi}{2^{5k+3l-2}\,\gxx{5k+3l-1}} 
\gc_{kl}		
	  \end{split}
  \end{equation}
for some  numbers $\gc_{kl}$ satisfying 
$\gc_{0,0}=-1/2$, $\gc_{10}=\gc_{01}=1$
and the recursion relation
\begin{multline}\label{mxy231r}
\gc_{k,l}
=2\xpar{5k+3l-4} \gc_{k,l-1}
+
\sumsumsum_{(i,j,m)\neq(0,0,0),(k,l,0)} %\hskip-6pt
\\
2^{2m}
\frac{\gG\bigpar{(5i+3j-1)/2+m)}}{\gG\bigpar{(5i+3j-1)/2)}}
\binom{l-j+m}{m}
 \gc_{i,j}\gc_{k-i-m,l-j+m}.
\end{multline}
  \end{romenumerate}
\end{theorem}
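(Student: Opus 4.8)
The plan is to obtain \refT{Tmom3} directly from \refL{LedXY} and the singularity-transfer \refL{Lsing}, with essentially no new probabilistic input: all of the analytic content is already packaged in those two lemmas, and what remains is to rescale the leading coefficients and rewrite their recursions. \refL{LedXY} tells us that $\ed(X_{123}^kY^l)$, $\ed(X_{213}^kY^l)$ and $\ed(X_{231}^kY^l)=\ed(X_{312}^kY^l)$ are polynomials in $\gdx1$ whose leading terms are $a_{kl}\gdxx{4k+3l-1}$, $b_{kl}\gdxx{5k+3l-1}$ and $c_{kl}\gdxx{5k+3l-1}$, with strictly positive leading coefficients. For $k+l\ge1$ the relevant exponent $m$ is $\ge1$ in each case, so \refL{Lsing} applies verbatim; in case \ref{ledmxy123} it gives
\[
\E\bigpar{X_{123}(T_n)^kY(T_n)^l}\sim a_{kl}\,\frac{\sqrt\pi}{\gxx{4k+3l-1}}\,n^{(4k+3l)/2},
\]
and similarly in the other two cases. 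Matching this against \eqref{mxy123} forces the definition $\ga_{kl}:=2^{4k+3l-2}a_{kl}/(k!\,l!)$, and likewise $\gb_{kl}:=2^{5k+3l-2}b_{kl}/(k!\,l!)$ and $\gc_{kl}:=2^{5k+3l-2}c_{kl}/(k!\,l!)$. The degenerate case $k+l=0$ is trivial, both sides being $1$; the stated values $\ga_{00}=\gb_{00}=\gc_{00}=-\tfrac12$ are just the images of the convention $a_{00}=b_{00}=c_{00}=-2$ from \refR{R00} under these rescalings, and one checks that the displayed limit formula does evaluate to $1$ there, using $\gG(-1/2)=-2\sqrt\pi$ in the denominator.

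With the limit constants defined this way, the theorem reduces to verifying that they obey the asserted recursions, which is pure algebra. For case \ref{ledmxy123} I would substitute $a_{kl}=k!\,l!\,2^{-(4k+3l-2)}\ga_{kl}$ into \eqref{xy123r} and cancel the common factor $k!\,l!\,2^{-(4k+3l-2)}$. In the convolution term the factorials collapse via $\binom ki\binom lj\,i!\,j!\,(k-i)!\,(l-j)!=k!\,l!$ and the two powers of two combine as $2^{-(4i+3j-2)}2^{-(4(k-i)+3(l-j)-2)}=2^{-(4k+3l-4)}$, turning the prefactor $\tfrac14$ into $1$; the two linear terms produce the coefficients $l+1$ and $2(4k+3l-4)$ after the cancellations $k\,(k-1)!=k!$ and $(l+1)\,l!=(l+1)!$. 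The initial values $\ga_{10}=\ga_{01}=1$ drop out of $a_{10}=\tfrac14$, $a_{01}=\tfrac12$. Case \ref{ledmxy213} is word-for-word the same with $4k$ replaced by $5k$, starting from \eqref{xy213r}.

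The one step where I expect genuine bookkeeping is case \ref{ledmxy231}, on account of the triple sum and the Gamma-ratio in \eqref{xy231r}. After substituting $c_{kl}=k!\,l!\,2^{-(5k+3l-2)}\gc_{kl}$, two identities do the work. First, the multinomial $\binom{k}{i,m,k-i-m}$, once $i!$ and $(k-i-m)!$ are absorbed from the two $\gc$-factors, leaves $k!/m!$, and combining the leftover $1/m!$ with $\binom lj$ and the factor $(l-j+m)!/(l-j)!$ yields exactly $k!\,l!\,\binom{l-j+m}{m}$. Second, the powers of two $2^{-(5i+3j-2)}$ and $2^{-(5(k-i-m)+3(l-j+m)-2)}$ multiply to $2^{-(5k+3l-4)}2^{2m}$, which is precisely where the explicit factor $2^{2m}$ in \eqref{mxy231r} comes from; the Gamma-ratio rides along unchanged because it already appears at the level of the $c$-recursion. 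The equality of the two limits in \eqref{mxy231} needs no separate argument, being inherited from $\ed(X_{231}^kY^l)=\ed(X_{312}^kY^l)$ in \refL{LedXY}. Since nothing here is conceptually hard, the only real risk is clerical error in tracking the powers of two and the factorial-to-binomial conversion, so I would cross-check case \ref{ledmxy231} on a few small $(k,l)$ against the explicit second moments recorded around \refT{Tmain}.
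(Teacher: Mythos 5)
Your proposal is correct and follows essentially the same route as the paper, which likewise derives \refT{Tmom3} immediately from Lemmas \ref{LedXY} and \ref{Lsing} via the rescalings $\ga_{k,l}=2^{4k+3l-2}a_{k,l}/(k!\,l!)$, $\gb_{k,l}=2^{5k+3l-2}b_{k,l}/(k!\,l!)$, $\gc_{k,l}=2^{5k+3l-2}c_{k,l}/(k!\,l!)$ and the convention of \refR{R00} for $k=l=0$. Your explicit bookkeeping of the factorials and powers of two in the three recursions (including the $2^{2m}$ factor and the $\binom{l-j+m}{m}$ in case \ref{ledmxy231}) is accurate and merely spells out what the paper leaves as "immediate".
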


\begin{proof}
  Immediate from Lemmas \ref{LedXY} and \ref{Lsing} with the definitions
\begin{align}
  \ga_{k,l}&:=\frac{2^{4k+3l-2}}{k!\,l!}a_{k,l},
\\
  \gb_{k,l}&:=\frac{2^{5k+3l-2}}{k!\,l!}b_{k,l},
\\
  \gc_{k,l}&:=\frac{2^{5k+3l-2}}{k!\,l!}c_{k,l}.
\end{align}
The choice $\ga_{0,0}=\gb_{0,0}=\gc_{0,0}:=-1/2$ satisfies both
\eqref{mxy123}, \eqref{mxy213}, \eqref{mxy231} for $k=l=0$ (trivially)
and the recursions
  \eqref{mxy123r}, \eqref{mxy213r}, \eqref{mxy231r} for $k+l=1$,
\cf{} \refR{R00}.
\end{proof}

Note that when we have proved \refT{Tmain}, 
%we see from \eqref{tmainmom}
it follows
that the limits in 
\eqref{mxy123}, \eqref{mxy213}, \eqref{mxy231}
are equal to the moments $\E\bigpar{\gL_{123}^k\gL_{12}^l}$, etc.

\begin{remark}
The number  $\gb_{k,l}$ in \refT{Tmom3}\ref{ledmxy213} 
satisfy the same recursion as 
$\go^*_{l,k} $ in \cite{SJ146}, and thus
$\gb_{k,l}=\go^*_{l,k} $. Indeed they both appear in similar moment
formulas, and the equality is explained by the identities in \refR{Rwiener}
below. 
\end{remark}

In the same way it is possible to find mixed moments of these variables,
first for $\td$ and then (asymptotically, or exact) for $T_n$. 
We give only an example.

\begin{example}\label{Ecov3}
  Let $(V_1,V_2,V_3) = (X_{213}(T_n),X_{231}(T_n),X_{312}(T_n))$ be the
 three  random variables in \eqref{e213}; recall that these have equal mean. 
Using the recursions \eqref{r213}--\eqref{r312}, the method in the proof of
\refL{Lmom1} yields $\ed (V_iV_j)$ as polynomials in $\gdx1$ of degree 9. 
After calculating the leading coefficients (we omit the details), 
we obtain from \refL{Lsing}, in
matrix notation, 
\begin{equation}\label{ecov3}
\bigpar{  n^{-5}\E (V_i V_j)}_{i,j=1}^3 \to \frac{1}{840}
  \begin{pmatrix}
	 49& 42& 42\\  
42 & 43 & 41\\
42 & 41 & 43
  \end{pmatrix}
.
\end{equation}
\end{example}

\section{Brownian functionals}\label{Sbrown} 

Given a binary tree $T$, let $h(v)=h(v;T)$ be the \emph{height} (also called
depth) of a vertex $v\in T$, defined as the distance to the root. 
Thus $h(v)$ is the number of
ancestors of $v$. We define also the \emph{left height} $h_L(v)$ 
as the number of ancestors $w$ of $v$ such that $v$
belongs to the left subtree of $w$, and similarly
the \emph{right height} $h_R(v)$. Equivalently, $h_L(v)$ is the number of
left steps in the path to $v$.

Define the \emph{profile} of a binary tree $T$ as the sequence
$h(v_1),\dots,h(v_n)$, where $v_1,\dots,v_n$ are the vertices of $T$ in
\emph{inorder}; recall that the inorder is defined recursively by taking
first the vertices of $T_L$, then the root and then the vertices of $T_R$
\cite[Section 2.3.1]{KnuthI}.
We write $h(i)=h(v_i)$ and regard $h$ as a function both on the
vertex set of $T$ and on $[n]$. We further define, for $1\le i\le j\le n$,
\begin{equation}
  h([i,j]):=\min_{l\in[i,j]} h(l).
\end{equation}

It is well known that for the random binary tree $T_n$, the height $h(v)$ is
typically of the order $n\qq$. For example, if
$H(T_n):=\max_{v\in T_n} h(v)$  
is the  \emph{height} of $T_n$, then
$H(T_n)/n\qq$ converges in distribution as \ntoo{} (\eg{} as a consequence
of \refL{Ld} below, see \cite{AldousII}).
Moreover, if we normalize the profile by defining 
\begin{equation}\label{tdx}
  \tdx(x)=\tdx(x;T_n):=n\qqw h\bigpar{\floor{nx}+1;T_n}
\end{equation}
(with $\tdx(1)=0$),
which is a function $\oi\to[0,\infty)$, 
then the random function $\tdx(x;T_n)$ converges in distribution to 
the standard normalized Brownian excusion $\ex(x)$, up to a constant factor,
as stated in the following lemma, in principle due to  \citet{AldousIII}.
(Informally, $\ex$ can be seen as Brownian motion on $\oi$ conditioned on
$\ex(x)\ge0$ and $\ex(1)=\ex(0)=0$. 
For formal treatments, 
see \eg{} \cite{DurrettIM} and \cite{RY}.)

\begin{lemma}\label{Ld}
  As \ntoo, $\tdx(x;T_n)\dto 2\qqc \ex(x)$.
\end{lemma}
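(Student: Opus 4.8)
The plan is to realize $\tn$ as a conditioned critical Galton--Watson tree and then deduce the limit from the invariance principle for its depth-first walk. Setting $\gd=0$ in the construction of \refS{Sexp}, the tree $\td$ becomes a Galton--Watson tree with offspring distribution $\Bi(2,\tfrac12)$, which has mean $1$ (so the tree is critical) and variance $\gss=\Var\Bi(2,\tfrac12)=\tfrac12$. The computation \eqref{sw1} specialized to $\gd=0$ shows that, conditioned on having exactly $n$ vertices, this tree is uniform on $\cB_n$, \ie{} $\bigpar{T_0\mid|T_0|=n}\eqd\tn$. Thus $\tn$ is a critical Galton--Watson tree of finite offspring variance $\gss=\tfrac12$, conditioned to have $n$ vertices.

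First I would establish the invariance principle for the depth-first walk (contour function) $C_n$ of $\tn$: for a conditioned critical Galton--Watson tree of variance $\gss$, the rescaled contour $n\qqw C_n(2n\,\cdot\,)$ converges in distribution, uniformly on $\oi$, to $(2/\gs)\ex$ (Aldous \cite{AldousIII}; since the limit is continuous, Skorokhod convergence here is uniform convergence). With $\gss=\tfrac12$ we have $\gs=2\qqw$, so the amplitude constant is $2/\gs=2\qqc$ and hence $n\qqw C_n(2n\,\cdot\,)\dto 2\qqc\ex=\cex$, exactly the asserted constant. A by-product I would record is the height bound $\max_{v}h(v;\tn)=\Op(n\qq)$.

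It then remains to identify the inorder profile $\tdx$ of \eqref{tdx} with this contour. A vertex $v_i$ that is $i$th in inorder is visited by the depth-first walk at contour time $2(i-1)+h_L(v_i)-h_R(v_i)$, with height $h(v_i)$ there; taking $v=v_{\floor{nx}+1}$ this gives $\tdx(x;\tn)=n\qqw C_n\bigpar{2\floor{nx}+h_L(v)-h_R(v)}$. Since $|h_L(v)-h_R(v)|\le h(v)\le\max_v h(v)=\Op(n\qq)$, the contour time equals $2nx+\op(n)$ uniformly in $x$, \ie{} it differs from $2nx$ by $\op(1)$ after dividing by $2n$. Because $\ex$ is \as{} uniformly continuous, this asymptotically negligible reparametrization does not affect the limit, and therefore $\tdx(\cdot\,;\tn)\dto\cex$.

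The main obstacle is this second and third step: proving the contour invariance principle for $\tn$ with the correct constant, and above all controlling the inorder reparametrization. The left--right correction $h_L-h_R$ can be of order $n$ for atypical trees (for instance a one-sided path), so the argument genuinely relies on the height bound $\max_v h=\Op(n\qq)$ to make the reparametrization $\op(1)$; once that and the contour convergence are in hand, uniform continuity of $\ex$ closes the proof.
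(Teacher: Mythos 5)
Your proposal is correct and follows essentially the same route as the paper: both invoke Aldous's invariance principle for the depth-first walk of $T_n$ (the paper citing it directly for uniform binary trees, you deriving the constant $2\qqc$ via the conditioned critical $\Bi(2,\tfrac12)$ Galton--Watson representation with $\gss=\tfrac12$), and both then identify the inorder profile with the contour through the same time-change identity $j=2(i-1)+h_L(v)-h_R(v)$, controlling the discrepancy by the height bound $H=O(n\qq)$ and concluding by uniform continuity of $\ex$. The paper's proof is just a more detailed version of your third step, adapting the Marckert--Mokkadem argument to the inorder.
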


\begin{remark}\label{Rskorohod}
 The convergence in \refL{Ld} is in the space $D\oi$ of right-continous
 functions with 
 left limits. (We could have defined $\tdx$ as a continuous function
 instead, using linear interpolation of $h(i)$ between integers, 
with no other  essential differences below, and then
 the convergence would have been in $C\oi$.)
For a full technical discussion of convergence in distribution in $D\oi$ or
$C\oi$, see \eg{} \cite{Billingsley}.
For our purposes, we may avoid technicalities by 
the Skorohod representation theorem \cite[Theorem 4.30]{Kallenberg},
which shows that we may assume that the random trees $T_n$ for different
$n$, and $\ex$, are coupled such that
the conclusion $\tdx(x;T_n)\dto 2\qqc \ex(x)$
holds \as{}, uniformly for $x\in\oi$,
\ie, 
$\sup_{x\in\oi}|\tdx(x;T_n)- 2\qqc \ex(x)|\to0$ a.s.
\end{remark}

\begin{proof}
As said above, this is in principle due to \citet{AldousIII}. More
precisely, Aldous considered the \emph{\dfw} on $T_n$, which is the sequence
of vertices $w_0,\dots,w_{2n-2}$ obtained by walking along the ``outside of
the tree'', with $w_0=w_{2n-2}=o$, the root, and beginning with the left
subtree (if any), see \eg{} \cite[Section 4.1.1]{Drmota}. 
Define $f(i):=h(w_i)$ and the normalized version
$\tf(x):=n\qqw f(\floor{2n x})$ for $x\in\oi$ (with $f(2n-1)=f(2n)=0$ for
completeness). 
\citet[Theorem 23]{AldousIII} proved (in greater generality) that  then
$\tf(x)\dto \cex(x)$.

Some variations (and a new proof) were given by \citet{MM}, including a
version with process of heights of the vertices taken in \df{} order
(first the root, then $T_L$, then $T_R$). In the present paper we use
instead the inorder, but the argument in \cite{MM} is easily adapted to this
case too, as follows.

Consider a vertex $v$ in a binary tree $T$. Let $T_L(v)$ and $T_R(v)$ denote
the left and right subtrees of $v$, and let $\cP_v$ be 
the set of the ancestors of $v$
(\ie, the path from the root to $v$, except $v$ itself).
It is easily seen that the vertices that come before $i$ in the inorder are
(i) the set $\cP_{v,R}:=\set{w\in\cP_v:v\in T_R(w)}$ and (ii)
$\cL_v:=\bigcup_{w\in\cP_{v,R}\cup\set v} T_L(w)$.
Hence, $v=v_i$, where
\begin{equation}
  i=1+h_R(v)+|\cL_v|.
\end{equation}
Similarly, since it takes the \dfw{} $2m$ steps to visit a subtree of size
$m$, it is easily seen that if
\begin{equation}
  j:=h(v)+2|\cL_v|,
\end{equation}
then $w_j=v=v_i$. Note that
\begin{equation}\label{2ij}
  |2i-j| =\bigabs{2+2h_R(v)-h(v)}
=\bigabs{2+h_R(v)-h_L(v)}
\le 2+H.
\end{equation}

Now consider again $T_n$.
Let $x\in [0,1)$ and let $i:=\floor{nx}+1$. Find the corresponding vertex
  $v_i\in T_n$ and define $j$ as above, and $y:=j/(2n)$. Then
  \begin{equation}\label{qk}
\tdx(x)
=n\qqw h(v_i)
=n\qqw h(w_{j})
=n\qqw h(w_{{2ny}})
=	\tf(y)
  \end{equation}
and, by \eqref{2ij},
\begin{equation}\label{ql}
  |x-y|\le 
\Bigabs{x-\frac in}+
\frac{|2i-j|}{2n}
\le\frac{4+H}{2n}.
\end{equation}
By the result by \citet{AldousIII} and \refR{Rskorohod}, we may assume that
$\sup|\tf(x)-\cex(x)|\to0$ \as{} as \ntoo.
By \eqref{qk} and \eqref{ql},
\begin{equation*}
  \begin{split}
|\tdx(x)-\cex(x)|	
&=
|\tf(y)-\cex(x)|	
\\&
\le
|\tf(y)-\cex(y)|	
+ 2\qqc|\ex(y)-\ex(x)|	
\\&
\le
\sup_y|\tf(y)-\cex(y)|	
+ 2\qqc\sup_{|x-y|\le (H+4)/2n}|\ex(y)-\ex(x)|.	
  \end{split}
\end{equation*}
The \rhs{} does not depend on $x$ and tends to 0 a.s., by the result of
\citet{AldousIII}, its immediate consequence $H/n\to0$, and the continuity of
$\ex$. 
\end{proof}

Actually, we need the corresponding result for the left height $h_L$. We
define, in analogy with \eqref{tdx},
\begin{equation}\label{tdl}
  \tdl(x)=\tdx(l;T_n):=n\qqw h_L\bigpar{\floor{nx}+1;T_n}.
\end{equation}

The following version of \refL{Ld} is in principle due to
\citet{Marckert-rotation}. 

\begin{lemma}\label{Ldl}
  As \ntoo, $\tdl(x;T_n)\dto \clex(x)$.
\end{lemma}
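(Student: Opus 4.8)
The plan is to mimic the proof of \refL{Ld}, transferring the known convergence of the depth-first walk to a statement about the left height by controlling the discrepancy between $h_L$ and $\tfrac12 h$. The key observation is that for a vertex $v$ in a binary tree, the height decomposes as $h(v)=h_L(v)+h_R(v)$, so $h_L(v)=\tfrac12 h(v)+\tfrac12\bigpar{h_L(v)-h_R(v)}$. If the difference $h_L(v)-h_R(v)$ is uniformly of smaller order than $n\qq$, then $n\qqw h_L$ and $\tfrac12 n\qqw h$ have the same scaling limit, and \refL{Ld} would give $\tdl(x)\dto \tfrac12\cdot 2\qqc\ex(x)=2\qq\ex(x)=\clex(x)$, which is exactly the claimed limit.

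First I would recall from the proof of \refL{Ld} that the quantity $2+h_R(v)-h_L(v)$ already appeared in \eqref{2ij}, where it was bounded by $2+H$; however, the crude bound $H=H(T_n)$ is of order $n\qq$, so this by itself is \emph{not} small enough to be negligible after dividing by $n\qq$. Thus the naive approach fails, and the real content of the lemma is that the signed difference $h_L(v)-h_R(v)$, although individually of order $n\qq$, fluctuates in a way governed by a \emph{separate} Brownian limit — this is precisely the point of \citet{Marckert-rotation}. Following Marckert, I would consider, alongside the depth-first walk, the walk recording $h_L(w_i)$ (or equivalently $h_R(w_i)$) along the depth-first traversal, and appeal to the joint convergence result that says the left-height process, suitably normalized, converges to $2\qq\ex$ (the factor differing from the $2\qqc$ in \refL{Ld} because the left steps constitute, asymptotically, ``half'' the height in a quadratic-variation sense). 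Concretely, Marckert's result provides the convergence of the normalized left-height along the traversal; I would then transfer from the depth-first order to the inorder exactly as in the proof of \refL{Ld}, using the same vertex correspondence $v_i=w_j$ with $j=h(v)+2|\cL_v|$ and the bound \eqref{ql} on $|x-y|$, together with the Skorohod coupling of \refR{Rskorohod}.

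The transfer step would run verbatim as before: writing $i=\floor{nx}+1$, locating $v_i$, setting $y=j/(2n)$, I would have $\tdl(x)=n\qqw h_L(v_i)=n\qqw h_L(w_j)$, and then compare $n\qqw h_L(w_{2ny})$ to its continuous limit $\clex(y)$, absorbing the error $|x-y|\le (H+4)/(2n)\to0$ via the uniform continuity of the limiting excursion. The one subtlety to check is that the normalized left-height of the \emph{depth-first} walk and the function $\tdl$ defined via inorder in \eqref{tdl} are compared at the matching argument $y$, so the only approximation incurred is the small horizontal shift $|x-y|$, identical to the situation in \refL{Ld}.

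The hard part will be justifying the scaling constant $2\qq$ in $\clex$ and, more fundamentally, the fact that the left-height process converges to the \emph{same} excursion $\ex$ (up to the constant) as the full height, rather than to an independent process. This is the genuine input from \citet{Marckert-rotation} — that $h_L$ and $h_R$ are asymptotically equal after rescaling, with their common limit being half the height limit — and I would cite that result rather than reprove it, noting as the lemma statement does that the result ``is in principle due to'' Marckert, and that the adaptation from depth-first to inorder is the same routine argument already carried out in the proof of \refL{Ld}.
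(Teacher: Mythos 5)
Your final argument is correct, but it takes a longer route than the paper and, along the way, dismisses the paper's actual proof as a failed ``naive approach''. The paper's proof is exactly the two\mbox{-}line argument of your first paragraph: \citet{Marckert-rotation} proves, in addition to the depth-first version of the statement, the uniform estimate
\begin{equation*}
n\qqw\max_{v\in T_n}|h_L(v)-h_R(v)|=n\qqw\max_{v\in T_n}|2h_L(v)-h(v)|\pto0
\end{equation*}
(this is \eqref{jfm}; in fact the maximum is only of order $n\qqqq$, as the remark following the lemma notes), so $h_L=\tfrac12 h+\op(n\qq)$ uniformly over vertices, and \refL{Ld} immediately gives $\tdl\dto\tfrac12\cdot2\qqc\ex=\clex$. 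Your objection to this route --- that $h_L(v)-h_R(v)$ is ``individually of order $n\qq$'' and fluctuates with a separate Brownian limit --- is a misreading: the bound $|2+h_R(v)-h_L(v)|\le 2+H$ in \eqref{2ij} is only a crude a priori bound, not the true size of the difference, and the genuine content of Marckert's ``dilatation'' theorem is precisely that the difference is uniformly $o(n\qq)$, so the approach you reject is the one that works. Your replacement route --- cite \citet{Marckert-rotation} for convergence of the normalized left height along the depth-first walk to $\clex$, then transfer from depth-first order to inorder by repeating the vertex correspondence $v_i=w_j$ and the bound \eqref{ql} --- is also valid and proves the lemma with the correct constant; it simply re-does for $h_L$ the inorder transfer that \refL{Ld} has already carried out for $h$, whereas the uniform bound above lets one piggyback on \refL{Ld} directly. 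Either way the input from \citet{Marckert-rotation} is indispensable and is correctly identified as the crux.
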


\begin{proof}
  \citet{Marckert-rotation} proved this for the \df{} order; and, moreover,
  that
  \begin{equation}
	\label{jfm}
n\qqw\max_{v\in T_n}|h_L(v)-h_R(v)|=n\qqw\max_{v\in T_n}|2h_L(v)-h(v)|\pto0.
  \end{equation}
The result follows by \refL{Ld} and \eqref{jfm}.
\end{proof}

\begin{remark}
  It is known that the maxima in \eqref{jfm} actually are of the order
  $n\qqqq$, see \eg{} \cite{Marckert-rotation} and \cite{SJ185} for further
  results. 
\end{remark}

We return to permutations.
Let $\pi\in\fSnzzz$ be a 132-avoiding permutation and let $T$ be the
correspondng binary tree defined in \refS{Sbinary}.
Label the vertices by the corresponding elements of $\pi$. 
(Thus the root is labelled by the maximum element $\pi_\ell=n$.)
The inorder on $T$ corresponds to the standard order on the index set $[n]$;
thus, the permutation $\pi$ can be recovered by taking the labels of $T$ in
inorder. (This is why we need the inorder above.)

Define a partial order on the vertices of $T$ by $v\prec w$ if $v$ is an
ancestor of $w$, \ie{} lies on the path from the root to $w$. 
For two vertices $v$ and $w$, we let $v\wedge w$ be their last common
ancestor (which is their greatest lower bound in this order).

Let, as above, $v_1,\dots,v_n$ be the vertices of $T$ in inorder; thus $v_i$
is labelled by $\pi_i$. 
Consider a pair of distinct $i,j\in[n]$.
It follows  from the construction of $T$ that 
if $v_i\prec v_j$, then $\pi_i>\pi_j$. Symmetrically, 
if $v_j\prec v_i$, then $\pi_i<\pi_j$.
If neither holds, and $i<j$, then there exists a last common ancestor $v_l$
and then $i<l<j$ and $\pi_l>\pi_i>\pi_j$. Consequently, assuming $i<j$, we
have
\begin{equation}
  \label{ij}
\pi_i<\pi_j \iff v_j\prec v_i.
\end{equation}

\begin{theorem}\label{Tpsi}
  Let $\gs\in\fSkzzz$ with $k\ge1$.
Then there exists a continuous functional 
$\Psi_\gs$ on $\coi$ such that
$
n^{-\gl(\gs)/2}\ns(\pinzzz)\dto \Psi_\gs(\ex)
$ 
as \ntoo; 
furthermore, 
$\Psi_\gs(\ex)>0$ a.s.

Moreover,
this holds jointly for all $\gs$.
\end{theorem}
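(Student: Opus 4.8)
The plan is to express $\ns(\pi)$ as a sum over $k$-tuples of vertices of the binary tree $T$ and then rescale, using the convergence of the profile to a Brownian excursion. First I would observe, using the recursive decomposition of $\fSnzzz$ via the position of the maximum (\refL{L132}) together with the correspondence \eqref{ij}, that an occurrence of $\gs$ in $\pi$ corresponds to a $k$-tuple of vertices $v_{i_1},\dots,v_{i_k}$ in inorder whose mutual ancestor/descendant relationships (captured by the partial order $\prec$ and the last common ancestors) realize the pattern $\gs$. Concretely, for each pair the relation $\pi_{i_a}<\pi_{i_b}$ is determined by whether one of $v_{i_a},v_{i_b}$ is an ancestor of the other, and this in turn is encoded by comparing the heights $h(i_a),h(i_b)$ against $h([i_a,i_b])$, the minimal height on the inorder-interval between them. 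The upshot is a combinatorial formula
\begin{equation*}
\ns(\pi)=\sum_{1\le i_1<\dots<i_k\le n}\etta\bigcpar{(v_{i_1},\dots,v_{i_k})\text{ realizes }\gs},
\end{equation*}
where the indicator is a function of the heights $h(i_a)$ and of the pairwise minima $h([i_a,i_b])$ alone.

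Next I would rescale. Setting $i_a=\floor{nx_a}+1$ and dividing heights by $n\qq$, the sum becomes (up to the $\binom nk$-type combinatorial factor $n^{|\gs|}$ absorbed into the volume element) a Riemann sum over the simplex $\set{0\le x_1<\dots<x_k\le 1}$, and each height $n\qqw h(\floor{nx_a}+1)$ converges to $2\qqc\ex(x_a)$ by \refL{Ld}, with $n\qqw h([\floor{nx_a}+1,\floor{nx_b}+1])$ converging to $2\qqc\min_{x_a\le t\le x_b}\ex(t)$. The descent count enters because realizing the pattern $\gs$ forces certain strict inequalities among neighbouring heights: each descent of $\gs$ corresponds to a constraint that forces two of the indices to collapse to the same point on a finer scale, contributing an extra factor of $n\qqw$ per descent. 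Tracking the exact power of $n$ shows the normalization is $n^{(|\gs|+D(\gs))/2}=n^{\gl(\gs)/2}$, matching \eqref{gl}, and identifies the limiting functional $\Psi_\gs(\ex)$ as an iterated integral over a sub-simplex of $\oi^{d}$ (with $d=\gl(\gs)-|\gs|$ integrations collapsed, i.e.\ $d=|\gs|-D(\gs)$ surviving free variables) of a product of excursion values and excursion minima determined by the shape of $\gs$. Continuity of $\Psi_\gs$ as a functional on $\coi$ follows because it is built from continuous operations (integration, pointwise minima, evaluation) on the path.

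To pass from convergence of the Riemann-sum approximant to the stated convergence in distribution, I would invoke the Skorohod coupling of \refR{Rskorohod}, under which $\tdx(\cdot;T_n)\to 2\qqc\ex$ uniformly almost surely, and argue that the rescaled sum converges almost surely to $\Psi_\gs(\ex)$; since this holds simultaneously for every $\gs$ using the single coupling, the joint convergence follows immediately. Strict positivity $\Psi_\gs(\ex)>0$ a.s.\ I would obtain by exhibiting an open set of excursion paths, of positive probability, on which the defining integrand is strictly positive on a set of positive measure — for instance by noting that a generic excursion has a region where it is monotone increasing and a region where it is monotone decreasing, which suffices to realize any fixed $132$-avoiding $\gs$ locally.

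The main obstacle I expect is the careful bookkeeping in the rescaling step: correctly identifying which coincidences among the indices $i_1<\dots<i_k$ are forced by the descent structure of $\gs$, so that the power of $n$ comes out exactly as $\gl(\gs)/2$ rather than as the naive $|\gs|/2$. In particular one must show that the "diagonal" contributions where consecutive indices differ by $O(1)$ rather than $O(n)$ are precisely the dominant ones at each descent, and that off-diagonal contributions there are negligible — this is where the local structure of the excursion near its value, and the distinction between $h$ and the interval-minimum $h([i,j])$, must be handled with care. Controlling the boundary effects of the Riemann sum (where some $x_a$ coincide in the limit) and verifying uniform integrability to justify the almost-sure convergence of the sums under the coupling will be the technically delicate points.
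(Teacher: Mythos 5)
Your overall strategy coincides with the paper's: write $\ns(\pi)$ as a sum of indicators over $k$-tuples of vertices of the tree, translate the pattern conditions into ancestry relations via \eqref{ij}, rescale using profile convergence under the Skorohod coupling of \refR{Rskorohod}, and read off the limit as an integral functional of $\ex$. However, there is a genuine error in the power-counting step, which you yourself flag as the crux. You assert that each \emph{descent} of $\gs$ forces a pair of indices to collapse and costs a factor $n\qqw$, and that the limiting integral has $|\gs|-D(\gs)$ surviving free variables. Both statements are backwards, and your stated mechanism yields the exponent $k-D(\gs)/2$ rather than the claimed $(k+D(\gs))/2$: for $\gs=123$ it would predict $n^{5/2}$ instead of the correct $n^{2}$ of \eqref{e123}. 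The correct dichotomy is the opposite one. The $D(\gs)$ indices $j$ with $j=1$ or $\gs_{j-1}>\gs_j$ (the paper's \emph{black} indices) are essentially free, contributing $n^{D(\gs)}$ choices; each of the remaining $k-D(\gs)$ (\emph{white}) indices $j$ has $\gs_{j-1}<\gs_j$, so by \eqref{ij} the corresponding vertex must be an \emph{ancestor} of the vertex at the most recent black index, and an ancestor path has length $O(H)=O(n\qq)$, whence a factor $n\qq$ per white index and the total $n^{D(\gs)}\cdot n^{(k-D(\gs))/2}=n^{\gl(\gs)/2}$. Consequently $\Psi_\gs$ is an integral over the $D(\gs)$-dimensional simplex of a polynomial of degree $k-D(\gs)$ in excursion values and interval minima, not an integral in $k-D(\gs)$ variables.

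Two further gaps. First, your picture of the constrained indices as ``diagonal'' contributions where consecutive indices differ by $O(1)$ is not what happens: an ancestor can be at inorder distance $\Theta(n)$ from its descendant, so the white vertices are confined to root-paths — a constraint on heights, captured by $h_L$ and $h_L([i,j])$, for which one needs the left-height convergence of \refL{Ldl} rather than \refL{Ld} — and not to small inorder neighbourhoods. Second, you nowhere use that $\gs$ itself avoids $132$. This hypothesis is essential for the key observation that if $i<j$ and $j$ is black then $\gs_i>\gs_j$ (otherwise $\gs_i\gs_{j-1}\gs_j$ would be an occurrence of $132$ in $\gs$), which is exactly what guarantees that the black vertices are subject only to a negligible non-ancestry restriction, excluding merely $O(n^{D(\gs)-1}H)$ tuples, and are therefore genuinely free. (A minor point: the positivity argument via a ``monotone region'' of the excursion is unavailable, since $\ex$ is nowhere monotone; positivity is, however, immediate once the explicit integral formula for $\Psi_\gs$ is in hand.)
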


\begin{proof}
  We say that an index $i\in[k]$ is \emph{black} if either $i=1$ or
  $\gs_{i-1}>\gs_i$. (I.e., $i-1$ is 0 or a descent.)
Otherwise, $i$ is \emph{white}.
Let $B$ be the set of black indices, and $W$ the set of white indices.
Thus $|B|=D(\gs)$ and $|W|=|\gs|-D(\gs)$.

\emph{Claim}: If $i<j$ and $j$ is a black index, so $\gs_{j-1}>\gs_j$, then
$\gs_i>\gs_j$, since otherwise $\gs_i\gs_{j-1}\gs_j$ would be an occurence
of 132 in $\gs$.

Let $\nu_1\dotsm\nu_k$ be a sequence with $1\le\nu_1<\dotsm<\nu_k\le n$ and
let us investigate whether
$\pi_{\nu_1}\dotsm\pi_{\nu_k}$ is an occurrence of $\gs$ in $\pi$.
Write, for convenience, $\vv_{i}=v_{\nu_i}$, the vertex in $T$ 
with label $\pi_{\nu_i}$. We say that $\nu_i$, or $\vv_i$, is black or white
if $i$ is.

We first consider $\nu_i$, or equivalently $\vv_i$, for the black indices $i$.
By the claim above, if $i$ and $j$ are black indices with $i<j$, then
$\gs_i>\gs_j$ and thus we require $\pi_{\nu_i}>\pi_{\nu_j}$, which by
\eqref{ij} is equivalent to $\vv_j\not\prec\vv_i$.
The only condition for the black vertices $\vv_i$ is thus that they are in
increasing inorder and none is an ancestor of a previous one.

We then consider $\nu_i$ for the white indices, in order from left to
right. For each white $j$ the conditions are as follows, by \eqref{ij}
and the claim above.
Let $U_j:=\set{i<j:\gs_i<\gs_j}$ and note that $j-1\in U_j$ since $j$ is
white.
\begin{romenumerate}
\item \label{wa}
$\nu_j>\nu_{i}$ for $i<j$.
\item \label{wb}
$\vv_j\prec\vv_i$ for $i\in U_j$.
\item \label{wc}
$\vv_j\not\prec\vv_i$ for $i\in [j-1]\setminus U_j$.
\item \label{wd}
$\nu_j<\nu_i$ and $\vv_i\not\prec\vv_j$ for every black $i>j$.
\end{romenumerate}

Furthermore, let
$b=b(j)$ be the largest black index in $[j-1]$.

The index $b\in U_j$ so by \ref{wb}, $\vv_j\prec\vv_b$, \ie, $\vv_j$ is on
the path from the root to $\vv_b$. 
Moreover, by \ref{wa}, $\nu_j>\nu_b$, so
$\vv_j$ comes after $\vv_b$ in the inorder; this means that the next step
from $\vv_j$ on the path to $\vv_b$ is to the left.
The number of such $\vv_j$ (ignoring the other conditions) is $h_L(\vv_b)$.
For such $\vv_j$, the condition \ref{wb} that $\vv_j\prec\vv_i$ for $i\in
U_j$ is equivalent to $\vv_j\prec\vv_i\wedge\vv_b$, and thus to,
if $i\le b$,
\begin{equation}\label{q1}
  h_L(\nu_j)=h_L(\vv_j) < h_L(\vv_i\wedge\vv_b)=h_L([\nu_i,\nu_b]);
\end{equation}
if $i>b$, so also $\vv_i$ is on the path to $\vv_b$, the condition
is simply 
\begin{equation}
  \label{q2}
h_L(\nu_j)<h_L(\nu_i). 
\end{equation}

For $i\in[j-1]\setminus U_j$, which implies $i<b$,
\ref{wc} conversely requires
\begin{equation}\label{q3}
  h_L(\nu_j) \ge h_L([\nu_i,\nu_b]).
\end{equation}

In \ref{wd}, for black $i>j$, the condition 
$\vv_i\not\prec\vv_j$ is redundant, since we already know $\vv_j\prec\vv_b$
and $\vv_i\not\prec\vv_b$ (both $b$ and $i$ are black).
Since  $\vv_j\prec\vv_b$ and $\nu_b<\nu_j$, we see also that $\nu_j<\nu_i$ 
implies $\vv_j\succeq\vv_b\wedge\vv_i$. 
(If $\vv_j\prec\vv_b\wedge\vv_i$,
then $\vv_b$ and $\vv_i$ are on the same side of $\vv_j$.)
This means 
\begin{equation}\label{q4}
  h_L(\nu_j) \ge h_L([\nu_b,\nu_i]).
\end{equation}
Conversely, \eqref{q1}--\eqref{q4} are also sufficient for \ref{wa}--\ref{wd}.
(Note that \eqref{q4} implies that 
$\vv_j\in T_L(\vv_b\wedge\vv_i)\cup\set{\vv_b\wedge\vv_i}$
and $\vv_i\in T_R(\vv_b\wedge\vv_i)$; thus $\nu_j<\nu_i$.
Similarly, \ref{wa} follows from \eqref{q1}--\eqref{q3}.)

Consequently, having chosen the black vertices, we have to choose $\nu_j$
for the white indices $j$ such that 
$\vv_j$ is on the path from the root to $\vv_{b(j)}$, with a left step next,
and 
\eqref{q1}--\eqref{q4} hold.

Let us count.
We choose first the black vertices, one by one. There are $\binom n{D(\gs)}$
choices of $\set{\nu_i:i\in B}$. Of these, the condition
$\vv_j\not\prec\vv_i$ for $i<j$ forbids only $O(H)$ choices for each $j$
(where $H=H(T)$ is the height), and thus
$O\bigpar{n^{D(\gs)-1}H}$ choices of the black vertices. We will simply
ignore this restriction, introducing an error that will be negligible.

For each choice of black vertices, we then choose the white vertices
$\vv_i$. By the conditions above, there are at most $H$ choices for each
white $\vv_i$, and thus at most $H^{|W|}=H^{k-D(\gs)}$ choices for
$\set{\vv_i:i\in W}$. More precisely, this number is a polynomial 
$\Phi=\Phi_\gs$ of degree
$k-D(\gs)$ in the numbers $h_L(\nu_i)$ and $h_L([\nu_i,\nu_j])$, $i,j\in
B$. We will not attempt to give an exact description of this polynomial in
general, but we give after the proof a few examples that will illustrate the
construction, and it should be clear that similar constructions hold
in general.

Let $B=\set{b_1,\dots,b_D}$ where $D=D(\gs)$.
We regard $\Phi$ as a functional of the left profile $h_L$ and the black
indices $\nu_{b_1},\dots,\nu_{b_D}$,  and obtain
\begin{equation}\label{jul}
  \ns(\pi) 
= \sum_{\nu_{b_1}<\dotsm<\nu_{b_D}} \Phi\bigpar{h_L;\nu_{b_1},\dots,\nu_{b_D}}
+O\bigpar{n^{D-1}H\cdot H^{k-D}},
\end{equation}
where the error term comes from including also forbidden sets of black
vertices. 

We now use \refL{Ldl}. By the Skorohod representation theorem, see
\refR{Rskorohod}, we may assume that 
%the normalized left profile 
$\tdl(x)\to 2\qq\ex(x)$ uniformly on $\oi$ as \ntoo.
In particular this implies that $n\qqw\max_i h_L(i)=\sup_{x\in\oi}\tdl(x)=O(1)$.
(The implicit constant is random but does not depend on $n$.)
Similarly, by \refL{Ld}, we may assume
$n\qqw H=n\qqw\max_i h(i)=O(1)$, \ie, $H=O(n\qq)$.

Letting $\Phi'$ be the leading terms in $\Phi$, which are homogeneous of
degree $|W|=k-D$, 
and letting $\Phix$ be the corresponding functional for functions on \oi,
we then obtain from \eqref{jul},
\begin{equation*}
  \begin{split}
  \ns(\pinzzz) &
= \sum_{\nu_{b_1}<\dotsm<\nu_{b_D}} \Phi'\bigpar{h_L;\nu_{b_1},\dots,\nu_{b_D}}
+O\bigpar{n^D H^{k-D-1}}
\\&\hskip16em
+O\bigpar{n^{D-1}H^{1+k-D}}  
\\&
= n^{(k-D)/2}\sum_{0\le i_1<\dotsm<{i_D}\le n-1} 
\Phix\bigpar{\tdl;{i_1/n},\dots,{i_D/n}}
+O\bigpar{ n^{(D+k-1)/2}}
\\&
= n^{(k+D)/2}\int_{0\le x_1<\dotsm<{x_D}\le 1} 
\Phix\bigpar{\tdl;{x_1},\dots,{x_D}}\dd x_1\dotsm \dd x_D
\\&\hskip16em
+O\bigpar{ n^{(D+k-1)/2}}.
  \end{split}
\end{equation*}
We define, for a function $f$ on \oi,
\begin{equation}\label{psis}
  \Psis(f):=
2^{(k-D)/2}\int_{0\le x_1<\dotsm<{x_D}\le 1} 
\Phix\bigpar{f;{x_1},\dots,{x_D}}\dd x_1\dotsm \dd x_D
\end{equation}
and have thus, by the uniform convergence $\tdl\to 2\qq \ex$,
\begin{equation}
n^{-(k+D)/2}  \ns(\pinzzz)
=\Psis(2\qqw\tdl)+o(1)
\to\Psis(\ex).
\end{equation}
It is obvious that $\Psis(\ex)>0$ a.s.
This completes the proof. 
\end{proof}

\begin{example}\label{Epsi12}
  $\gs=12$.
$B=\set1$. For every choice of the black $\nu_1$, there are $h_L(\nu_1)$
  choices of the white $\nu_2$. Hence \eqref{jul} is simply
$\ns(\pi)=\sum_{\nu=1}^n h_L(\nu)$ and  $\Phi(h_L;\nu)=h_L(\nu)$.
Consequently $\Phix(f;x)=f(x)$ and \eqref{psis} yields
\begin{equation}\label{gl12}
  \gL_{12}=X_{12}(T) = \Psi_{12}(\ex)=\sqrt2\intoi \ex(x)\dd x.
\end{equation}
As said in \refS{Sresults},
this is, apart from the factor $\sqrt2$, the well-known Brownian excursion
area.
\end{example}

\begin{example}
  $\gs=123$.
$B=\set1$. 
Both $\vv_2$ and $\vv_3$ are on the path to $\vv_1$, both with the next step
left, and with $\vv_3\prec\vv_2$.
There are $\binom{h_L(\nu_1)}2$ choices of them for each $\nu_1$. Hence, 
\begin{equation}
n_{123}(\pi)=X_{123}(T)=\sum_{\nu=1}^n \binom{h_L(\nu)}2
\end{equation}
which leads to $\Phix(f;x)=\frac12f(x)^2$ and
\begin{equation}
  \gL_{123} = \Psi_{123}(\ex)=\intoi \ex(x)^2 \dd x.
\end{equation}
The joint distribution of the random variables $\intoi\ex$ (see
\refE{Epsi12}) and $\intoi\ex^2$ 
have been studied by \citet{Nguyen}, who found a recursion for mixed moments
equivalent to \refT{Tmom3}\ref{ledmxy123}. He also found the Laplace
transform
$\E e^{-t\gL_{123}}=\bigpar{\sqrt{2t}/\sinh(\sqrt{2t})}^{3/2}$,
which shows that  $\gL_{123}$ has the distribution denoted $S_{3/2}$ in 
\citet{BianePY}, see in particular \cite[Secion 4.4]{BianePY}
(and recall that $\ex$ can be seen as a 3-dimensional Bessel bridge).
Equivalently, $\gL_{123}$ has the moment generating function
\begin{equation}\label{mgf123}
\E e^{t\gL_{123}}=\parfrac{\sqrt{2t}}{\sin(\sqrt{2t})}^{3/2},  
\qquad \Re t < \frac{\pi^2}2.
\end{equation}
\end{example}

\begin{example}\label{Epsi1...k}
More generally, for $\gs=1\dotsm k$, any $k\ge1$, 
\begin{equation}
n_{1\dotsm k}(\pi)=X_{\gsik}(T)=\sum_{\nu=1}^n \binom{h_L(\nu)}{k-1}
\end{equation}
and
\begin{equation}
  \gL_{1\dotsm k} = \Psi_{\gsik}(\ex)
=\frac{2^{(k-1)/2}}{(k-1)!} \intoi \ex(x)^{k-1} \dd x.
\end{equation}
Thus, if $Z_k:=\intoi \ex(x)^k$, the average of the $k$:th power of the
Brownian excursion, then 
$\gL_{1\dotsm k} =c_k Z_{k-1}$ with $c_k=2^{(k-1)/2}/(k-1)!$.
The random variables $Z_k$ have been studied by 
%\citet{Nguyen} ($k=1,2$) and
\citet{Richard}; in particular, \cite{Richard} gives a recursion formula for
the mixed moments, which is equivalent to our recursion implicit in the
proof of \refL{Lmom1} for this case.

Note that, by \Holder's inequality, $Z_k\ge Z_1^k$ for every $k$; hence
by the known asymptotics for moments of the Brownian excursion area $Z_1$,
see \eg{} \cite{SJ201},
as \rtoo,
\begin{equation}\label{ulk1}
  \E Z_k^r \ge \E Z_1^{kr} \sim 3\sqrt2 kr  \Bigpar{\frac{kr}{12 e}}^{kr/2}.
\end{equation}
More precisely, it follows from \cite[Theorem 2.1]{SJ197} 
(applied to $Z_k^{1/k}$) that for every fixed $k\ge1$, as \rtoo,
\begin{equation}\label{ulk2}
   \bigpar{\E Z_k^r}^{1/r} \sim z_k r^{k/2}
\end{equation}
where $z_k>0$ is a constant given by
\begin{equation*}
  z_k:= \Bigparfrac ke^{k/2}\max\biggset{\intoi f(x)^k:f(0)=f(1)=0
\text{ and }
\intoi (f'(x))^2\le1}.
\end{equation*}
(We have $z_1=1/\sqrt{12 e}$ and $z_2=2/(e\pi^2)$; we do not know $z_k$ for
$k>2$.) 

It follows from \eqref{ulk1}--\eqref{ulk2}
that the \mgf{} $\E e^{tZ_k}$ of $Z_k$ is an entire function 
for $k=1$ (see further \cite{SJ201}), but has a finite radius of convergence
for $k=2$ and diverges for all $t>0$ when $k\ge3$.
(The claim in \cite[Theorem 1.2]{Richard} that $Z_1,\dots,Z_M$ have an
entire \mgf{} is thus incorrect. For $Z_2$ this is also seen by the explicit
formula \eqref{mgf123}.) 

Moreover, 
the Carleman condition
(in its weaker form for nonnegative random variables,  
see \eg{} \cite[Section 4.10]{Gut})
\begin{equation*}
%  \label{carl+}
\sum_m \bigpar{ \E Z_k^m}^{-1/2m}=\infty 
\end{equation*}
holds by \eqref{ulk2} for $k\le4$ but not for $k\ge5$.
Although the Carleman condition is only sufficient for a distribution to be
determined by its moments,
this strongly suggests that the distribution of $Z_k$, and thus
$\gL_{1\dots(k+1)}$, is \emph{not} determined by its moments if $k$ is large
enough. 
\end{example}

\begin{example}
  $\gs=213$.
$B=\set{1,2}$. 
Given $\vv_1$ and $\vv_2$, the white vertex $\vv_3$ has to be on the path to
$\vv_1\wedge\vv_2$. 
There are ${h_L(\vv_1\wedge\vv_2)}=h_L([\nu_1,\nu_2])$ choices, and thus
\begin{equation}
n_{213}(\pi)=X_{213}(T)=\sum_{\nu_1<\nu_2} h_L([\nu_1,\nu_2]) +O(nH^2)
\end{equation}
which leads to
\begin{equation}\label{gl213}
\gL_{213} = \Psi_{213}(\ex)= \sqrt2\iint_{0\le x<y\le1} \ex([x,y]) \dd x \dd y.
\end{equation}
\end{example}

\begin{example}
  $\gs=231$.
$B=\set{1,3}$. 
Given $\vv_1$ and $\vv_3$, the white vertex $\vv_2$ has to be on the path to
$\vv_1$ but not to $\vv_3$. 
There are $h_L(\vv_1)-{h_L(\vv_1\wedge\vv_3)}-1$ choices, and thus
\begin{equation}
n_{231}(\pi)=X_{231}(T)=\sum_{\nu_1<\nu_3}
\bigpar{h_L(\vv_1)- h_L([\nu_1,\nu_3])-1} +O(nH^2)
\end{equation}
which leads to
\begin{equation}\label{gl231}
  \gL_{231} = \Psi_{231}(\ex)= \sqrt2\iint_{0\le x<y\le1}
 \bigpar{\ex(x)- \ex([x,y])} \dd x \dd y.
\end{equation}
\end{example}

\begin{example}
  $\gs=312$.
$B=\set{1,2}$. 
Given $\vv_1$ and $\vv_2$, the white vertex $\vv_3$ has to be on the path to
$\vv_2$ but not to $\vv_1$. 
%There are $h_L(vv_1)-{h_L(\vv_1\wedge\vv_3)}$ choices, and thus
Thus
\begin{equation}
n_{312}(\pi)=X_{312}(T)=\sum_{\nu_1<\nu_2}
\bigpar{h_L(\vv_2)- h_L([\nu_1,\nu_2])-1} +O(nH^2)
\end{equation}
which leads to
\begin{equation}\label{gl312}
  \gL_{312} = \Psi_{312}(\ex)= \sqrt2\iint_{0\le x<y\le1}
 \bigpar{\ex(y)- \ex([x,y])} \dd x \dd y.
\end{equation}

Note that the equality in distribution
$\gL_{231}\eqd \gL_{312}$ here is immediate by 
\eqref{gl231}, \eqref{gl312} and
the
symmetry $\ex(x)\eqd\ex(1-x)$ of the Brownian excursion.
However, we see also that $\gL_{231}$ and $\gL_{312}$ differ as random
variables, which means that the joint distribution of $n_{231}$ and
$n_{312}$ 
does not have degenerate (one-dimensional) 
asymptotic distribution.
Cf.~the second moments in \eqref{ecov3}. 
\end{example}

Furthermore, note that the identity \eqref{rom} also can be seen from 
\eqref{gl12}, \eqref{gl213}, \eqref{gl231}, \eqref{gl312}.

\begin{remark}\label{Rwiener}
\citet{SJ146} studied some functionals of random trees and found as limits
in distribution three functionals of Browninan excursion, there denoted
$\xi$, $\eta$, $\zeta$. $\xi$ is simply twice the Brownian excursion area,
so by \eqref{gl12}, $\xi=\sqrt2 \gL_{12}$. Furthermore,
$\eta$ is 4 times the integral in \eqref{gl213}, and thus
$\eta=2^{3/2}\gL_{213}$. Finally,
$\zeta=\xi-\eta=2\qq(\gL_{231}+\gL_{312})$, by  
\eqref{rom} or by comparing the formula in \cite{SJ146} to
\eqref{gl231} and \eqref{gl312}.
\end{remark}

\begin{example}\label{Epsik...1}
  $\gs=\gski$. This is the trivial case when all vertices are black, so
$\Phi=1$ is constant. Thus also $\Phix=1$ and \eqref{psis} yields
$\Psi_{\gski}=1/k!$, in accordance with \refT{Tmain} and \refR{Rk1}. 
\end{example}

Although the expressions get increasingly more complicated, it is clear that
there are of the same nature for every $\gs$. In particular, 
%$\Psis(\ex)>0$ \as{} and that, 
except for the case $\gs=\gski$, see
\refE{Epsik...1}, $\Psis(\ex)$ is non-degenerate, \ie, not \as{} constant.

\section{Proof of \refT{Tmain}}

\refT{Tpsi} shows
the existence of limits $\gL_\gs=\Psis(\ex)$ such that \eqref{tmaind} holds,
jointly for all $\gs\in\fSxzzz$. Furthermore, $\gL_\gs>0$ \as{} and $\gL_\gs$ is
non-degenerate except in the case $\gs=\gski$.

On the other hand, Lemmas \ref{Lmom1} and \ref{Lsing} show that
for any $\gspp1,\dots,\gspp{M}$,
\begin{equation}\label{mompf}
 n^{-\sum_\nu\gl(\gspp\nu)/2}  \E\bigpar{n_{\gspp 1}\dotsm n_{\gspp M}(\pinzzz)}
\to A_{\gspp1,\dots,\gspp M}
\end{equation}
for some constant $A_{\gspp1,\dots,\gspp M} <\infty$.
As is well-known, convergence of all moments implies that all 
products $n_{\gspp 1}\dotsm n_{\gspp M}(\pinzzz)$ are uniformly integrable,
and thus the limits of the moments are the moments of the limits $\gL_\gs$.
\cite[Theorems 5.4.2 and 5.5.9]{Gut}.

\begin{remark}
  Note that we cannot use \eqref{mompf} to show the existence of limits
  $\gL_\gs$ in \eqref{tmaind}, since we cannot show that the limit
  distributions are determined by thier moments; on the contrary, we believe
  that they in general are not, see \refE{Epsi1...k}. 
This is one reason for using two different methods in the proof above, one
for the existence of limits in distribution and another for the limits of
moments. 
\end{remark}

\section{Further comments}\label{Sfurther}

As said in \refR{Rbonadom}, \citet{Bona-abscence} has shown that for every
$n$
and any $\gs\in\fSkzzz$,
\begin{equation}
  \label{bonadom}
\E n_{1\dotsm k}(\pinzzz) \le \E n_{\gs}(\pinzzz)
\le \E n_{k\dotsm 1}(\pinzzz).
\end{equation}
We use here the recursion \refL{LR} to show a more general result.

Define a partial order on each $\fSkzzz$ by
  \begin{equation*}
	\gs\prec\gs' 
\iff \set{(i,j):i<j\text{ and }\gs_i>\gs_j}
\subseteq
 \set{(i,j):i<j\text{ and }\gs'_i>\gs'_j}.
  \end{equation*}
\begin{theorem}
If\/ $|\gs|=|\gs'|$ and 
$\gs\prec\gs'$, then $\E n_{\gs}(\pinzzz)  \le \E n_{\gs'}(\pinzzz) $ for
every $n\ge1$.
\end{theorem}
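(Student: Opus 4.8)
The plan is to recast the claim in terms of the subcritical random tree $\td$ of \refS{Sexp} and to feed it into the generating-function transfer \eqref{sw3}. Write $e_n^\gs:=\E\ns(\pinzzz)=\E X_\gs(\tn)$, so that the assertion is $e_n^\gs\le e_n^{\gs'}$ for all $n$. Since $(\fSkzzz,\prec)$ is a finite poset and $\ed X_{\gs'}-\ed X_\gs$ telescopes along a saturated chain in the interval $[\gs,\gs']$, it suffices to treat a covering pair $\gs\lessdot\gs'$; under the bijection of \refS{Sbinary} such a cover is a single rotation of the associated binary tree of order $k$, so $\gs$ and $\gs'$ differ only locally.

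The engine is the following transfer. Put $w:=\gd\qw-1\ge0$. I claim it is enough to show
\begin{equation*}
\ed X_{\gs'}-\ed X_\gs=\textstyle\sum_j d_j\,w^j\quad\text{with all }d_j\ge0 .
\end{equation*}
Indeed, by \refL{Led} each $\ed X_\gs$ is a polynomial in $\gd\qw$, so with $C(x):=\sum_n C_nx^n$, the Catalan identities $1-\sqrt{1-4x}=2xC(x)$ and $C(x)-1=xC(x)^2$, and $\gd=\sqrt{1-4x}$, \eqref{sw3} gives
\begin{equation*}
\sumni C_n\bigpar{e_n^{\gs'}-e_n^\gs}x^n
= xC(x)^2\,\bigpar{\ed X_{\gs'}-\ed X_\gs}\big|_{\gd=\sqrt{1-4x}} .
\end{equation*}
Now $w=\gd\qw-1$ becomes $2xC(x)(1-4x)\qqw$, while $(1-4x)\qqw=\sum_m\binom{2m}mx^m$ and $xC(x)^2$ all have nonnegative Taylor coefficients; hence each $w^j$ and the prefactor do too, and the displayed series inherits nonnegative coefficients. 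Thus $e_n^{\gs'}\ge e_n^\gs$ for every $n$, which is the theorem.

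It remains to prove the $w$-nonnegativity for a cover, and this is the crux. I would induct on $k=|\gs|$, inserting the recursion of \refL{Led} for $\ed X_\gs$ and $\ed X_{\gs'}$ and comparing. The inductive hypothesis applies to the prefix and suffix pieces $\gsxx1q,\gsxx{q+1}k,\dots$, which have length $<k$; and two structural facts help. First, one checks that the position $m$ of the maximum is non-increasing along $\prec$ (an up-step in the weak order can only move the largest entry leftward), so the three cases $m=1$, $1<m<k$, $m=k$ of \refL{Led} stay compatible along the cover. Second, each individual $\ed X_\gs$ is itself a nonnegative polynomial in $w$ (so in all computed cases, \eg\ $\ed X_1=1+w$, $\ed X_{12}=\tfrac12(w+w^2)$, $\ed X_{123}=\tfrac14(w^2+w^3)$). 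Using these, the rotation relating $\gs$ and $\gs'$ alters the list of pieces only locally, and the difference $\ed X_{\gs'}-\ed X_\gs$ should collapse to a combination of $\tfrac12w$, the factor $\tfrac14\gd\qw(1-\gd)^2=\tfrac14\gd\,w^2$, and differences of shorter pieces that are nonnegative in $w$ by induction.

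The main obstacle is precisely this last assembly. The recursion is \emph{not} manifestly sign-preserving in the $w$-basis: the factor $\tfrac14\gd\,w^2$ carries $\gd=1/(1+w)$, whose expansion $1-w+w^2-\cdots$ alternates, and only the $(1+w)$-divisibility of the pieces (they have vanishing constant term in $\gd\qw=1+w$) cancels it, as in $\ed X_{231}-\ed X_{123}=\tfrac18(w^4+w^3)$ or $\ed X_{4321}-\ed X_{1234}=\tfrac1{64}(24w^4+44w^5+25w^6+5w^7)$. Making this cancellation uniform — tracking exactly how a rotation permutes and splits the prefix and suffix pieces, and why the surviving terms reassemble with nonnegative $w$-coefficients — is the real work. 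An attractive alternative, which may make the local effect of a rotation transparent, is to bypass $\td$ and prove the inequality for the total counts $\sum_\pi\ns(\pi)$ directly from \refL{LR}, by constructing a size-preserving injection of occurrences of $\gs$ into occurrences of $\gs'$ recursively along the tree decomposition.
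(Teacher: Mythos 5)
There is a genuine gap: your entire argument rests on the claim that, for a covering pair $\gs\lessdot\gs'$ in $(\fSkzzz,\prec)$, the difference $\ed X_{\gs'}-\ed X_\gs$ has nonnegative coefficients as a polynomial in $w=\gd\qw-1$, and you explicitly leave that claim unproved (you call it ``the real work''). The transfer step is fine --- with $\gd=\sqrt{1-4x}$ one has $\frac{1-\gd}{1+\gd}=xC(x)^2$ and $w=2xC(x)(1-4x)\qqw$, both with nonnegative Taylor coefficients, so $w$-positivity of the difference would indeed give $\E n_\gs(\pinzzz)\le\E n_{\gs'}(\pinzzz)$ for every $n$ --- but the positivity itself is exactly where all the difficulty sits, and as you yourself observe, the recursion of \refL{Led} is not sign-preserving in the $w$-basis because of the alternating factor $\gd=1/(1+w)$. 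A secondary unsupported step is the assertion that a cover in the \emph{subposet} $(\fSkzzz,\prec)$ is a single rotation of the associated binary tree; a cover in the subposet may skip over non-$\zzz$-avoiding permutations of the full weak order, so this needs its own justification. As written, the proposal is a plausible programme rather than a proof.

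The paper's argument is far more elementary and bypasses $\td$ and generating functions entirely: it inducts on $n$ (not on $k$ or along chains in $\prec$), conditions on the position $\ell$ of the maximum of $\pinzzz$ (given which $\pi_L$ and $\pi_R$ are independent and uniform on $\fS_{\ell-1}(\zzz)$ and $\fS_{n-\ell}(\zzz)$), and compares the two instances of the recursion \eqref{lr} term by term, using that $\gs\prec\gs'$ is inherited by all prefixes $\gs_1\dotsm\gs_q$ and suffixes $\gs_{q+1}\dotsm\gs_k$ and that $\gD_\gs\subseteq\gD_{\gs'}$; the induction hypothesis applies because $\pi_L$ and $\pi_R$ are shorter than $\pi$. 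Your closing suggestion --- to work directly from \refL{LR} recursively along the tree decomposition --- is essentially this route; carried out with conditional expectations and induction on $n$ rather than an explicit injection, it closes the proof in a few lines, whereas the generating-function route still requires the unproved positivity lemma.
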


Note that $1\dotsm k$ is minimal and $k\dotsm1$ is maximal in the partial
order $\prec$, so \eqref{bonadom} follows immediately.

\begin{proof}
  We use induction on $n$. The case $n=1$ is trivial.

Condition on the value of the maximal index $\ell$ in $\pi=\pinzzz$.
Given $\ell$, $\pi_L$ and $\pi_R$ are independent uniformly random elements
of $\fS_{\ell-1}(\zzz)$ and $\fS_{n-\ell}(\zzz)$ respectively.
Furthermore,
$\gs\prec\gs'$ implies that 
$\gs_1\dotsm\gs_q \prec \gs'_1\dotsm\gs'_q$
and 
$\gs_{q+1}\dotsm\gs_k \prec \gs'_{q+1}\dotsm\gs'_k$ for every $q\in[k]$, and
also $\gD_{\gs}\subseteq\gD_{\gs'}$. 

Using \eqref{lr} for both $\gs$ and $\gs'$ and taking the conditional
expectations, it follows, by this and the induction hypothesis, that
\begin{equation}
 \E \bigpar{n_\gs(\pinzzz) \mid \ell }
\le
\E \bigpar{n_{\gs'}(\pinzzz) \mid \ell} 
\end{equation}
for every value of $\ell\in[n]$.
Taking the expectation we obtain
$ \E n_\gs(\pinzzz)\le\E n_{\gs'}(\pinzzz)$, completing the induction step.
\end{proof}

\citet{Rudolph} has a general result, and a conjecture, for the 
related problem of when there is equality
$\E n_{\gs}(\pinzzz)=\E n_{\gs'}(\pinzzz)$ for all $n$.
It seems possible that \refL{LR} can be used to prove, and perhaps improve,
her results too, but we have not attempted this.

\newcommand\AAP{\emph{Adv. Appl. Probab.} }
\newcommand\JAP{\emph{J. Appl. Probab.} }
\newcommand\JAMS{\emph{J. \AMS} }
\newcommand\MAMS{\emph{Memoirs \AMS} }
\newcommand\PAMS{\emph{Proc. \AMS} }
\newcommand\TAMS{\emph{Trans. \AMS} }
\newcommand\AnnMS{\emph{Ann. Math. Statist.} }
\newcommand\AnnPr{\emph{Ann. Probab.} }
\newcommand\CPC{\emph{Combin. Probab. Comput.} }
\newcommand\JMAA{\emph{J. Math. Anal. Appl.} }
\newcommand\RSA{\emph{Random Struct. Alg.} }
\newcommand\ZW{\emph{Z. Wahrsch. Verw. Gebiete} }
\newcommand\DMTCS{\jour{Discr. Math. Theor. Comput. Sci.} }

\newcommand\AMS{Amer. Math. Soc.}
\newcommand\Springer{Springer-Verlag}
\newcommand\Wiley{Wiley}

\newcommand\vol{\textbf}
\newcommand\jour{\emph}
\newcommand\book{\emph}
\newcommand\inbook{\emph}
\def\no#1#2,{\unskip#2, no. #1,} %(typeset after year) 
\newcommand\toappear{\unskip, to appear}

\newcommand\arxiv[1]{\url{arXiv:#1.}}
\newcommand\arXiv{\arxiv}

\def\nobibitem#1\par{}

\end{document}